\newtheorem{proposition}{Proposition}
\newtheorem{theorem}[proposition]{Theorem}
\newtheorem{lemma}[proposition]{Lemma}
\newtheorem{corollary}[proposition]{Corollary}
\theoremstyle{remark}
\newtheorem{remark}[proposition]{Remark}
\numberwithin{equation}{section}
\numberwithin{proposition}{section}
\renewcommand{\leq}{\leqslant}
\renewcommand{\geq}{\geqslant}
\acrodef{SHE}{Stochastic Heat Equation}
\acrodef{WZ}{Wang--Zakai}
\acrodef{LHS}{Left Hand Side}
\acrodef{RHS}{Right Hand Side}
\newcommand{\e}{\varepsilon}
\newcommand{\U}{\mathcal{U}}	
\newcommand{\calX}{\mathcal{X}}
\newcommand{\Xm}{X^\text{mi}}
\newcommand{\calXm}{\mathcal{X}}
\newcommand{\calXmco}{\mathcal{Z}}
\newcommand{\calXmcoo}{\tilde{\mathcal{Z}}}
\newcommand{\Xmcoo}{\tilde{X}^\text{mi}}
\newcommand{\R}{\mathbb{R}}
\newcommand{\fil}{\mathscr{F}}		
\renewcommand{\P}{\mathbb{P}}		
\newcommand{\E}{\mathbb{E}}			
\newcommand{\sgn}{\text{sgn}}		
\newcommand{\ind}{\mathbf{1}}		
\newcommand{\img}{\mathbf{i}}		
\newcommand{\la}{\left\langle}
\newcommand{\ra}{\right\rangle}
\newcommand{\ov}{\overline}
\renewcommand{\tilde}{\widetilde}
\renewcommand{\hat}{\widehat}
\newcommand*{\Cdot}{{\raisebox{-0.5ex}{\scalebox{1.8}{$\cdot$}}}} 
\newcommand{\rev}[1]{\textcolor{black}{#1}}
\title[The Wong--Zakai theorem for SHE]{Another look into the Wong--Zakai Theorem\\{} for Stochastic Heat Equation}
\author{Yu Gu, Li-Cheng Tsai}
\address[Yu Gu]{\hspace{29pt}Department of Mathematics, Carnegie Mellon University, Pittsburgh, PA 15213}
\address[Li-Cheng Tsai]{Departments of Mathematics, Columbia University,
2990 Broadway, New York, NY 10027}
\begin{document}
\begin{abstract}
For the heat equation driven by a smooth, Gaussian random potential: 
\begin{align*}
	\partial_t u_\e=\tfrac12\Delta u_\e+u_\e(\xi_\e-c_\e), \   \  t>0, x\in\R,
\end{align*}
where $\xi_\e$ converges to a spacetime white noise, and $c_\e $ is a diverging constant chosen properly, 
we prove that $ u_\e $ converges in $ L^n $ to the solution of the stochastic heat equation for any $n\geq1$. 
Our proof is probabilistic, hence provides another perspective of the general result of Hairer and Pardoux \cite{Hairer15a},
for the special case of the stochastic heat equation. We also discuss the transition from homogenization to stochasticity.
\end{abstract}

\maketitle
\section{Introduction and main result}
\label{s.intro}


The study of stochastic PDEs has witnessed significant progress in recent years. Several theories have been developed to make sense of singular equations with multiplication of distributions, see \cite{Hairer13,Hairer14,Gubinelli15,Kupiainen16,Otto16} (and the references therein).
One example is the Wong--Zakai theorem for stochastic PDEs \cite{Hairer15,Hairer15a,Chandra17}, which is an infinite dimensional analogue of~\cite{Wong65,Wong65a,Stroock72}. 
In this article, we revisit this problem for a special case: the \ac{SHE} in one space dimension:
\begin{align}
	\label{e.she}
	\partial_t \U=\tfrac12\partial_{xx} \U+ \U\xi, \quad t>0, x\in\R,
\end{align}
where $\xi$ is a spacetime white noise, built on an underlying probability space $(\Omega,\fil,\P)$. 

The \ac{SHE}~\eqref{e.she} has played an important role in the study of directed polymers and random growth phenomena.
On the one hand, the solution of~\eqref{e.she} gives the partition function of a directed polymer in a noisy environment.
On the other hand, via the inverse Hopf--Cole transform, the equation~\eqref{e.she} yields the celebrated Kardar--Parisi--Zhang equation \cite{Kardar86},
which describes the height function of a certain type of random growth phenomenon. 
Even though the physical phenomena described by \ac{SHE} goes to higher dimensions, we focus on one dimension here. 
In fact, $ d= 2 $ and $d>2$ corresponds to the so-called critical and supercritical cases, and sit beyond any existing theory.
There, however, has been works on the cases (in $ d \geq 2 $) where the noise is tuned down to zero suitably with a scaling parameter. 
See~\cite{Feng12,Feng16,Caravenna17,Gu17,Magnen17}.

Throughout this article, 
we fix a bounded continuous initial condition $ u_0(x)\in C_b(\R) $,
and a mollifier $\phi\in C_c^\infty(\R^2;\R_+)$ with a unit total mass $\int \phi \, dt dx=1$.
Using this mollifier, we construct the mollified noise as
\begin{equation}\label{e.defxi}
	\xi_\e(t,x)=\int_{\R^2} \phi_\e(t-s,x-y)\xi(s,y)dyds,
	\quad
	\phi_\e(t,x)=\tfrac{1}{\e^3}\phi(\tfrac{t}{\e^2},\tfrac{x}{\e}).
\end{equation}
Given the smooth function $ \xi_\e $, consider the equation
\begin{equation}\label{e.maineq}
	\partial_t u_\e=\tfrac12\partial_{xx} u_\e+ u_\e(\xi_\e-c_\e), \   \  t>0, x\in\R.
\end{equation}
For the analogous equation where $ \xi_\e $ is white-in-time, regularized in space, and interpreted in the It\^o's sense,
Bertini and Cancrini~\cite{Bertini95} showed that, for $ c_\e=0 $, the solution $ u_\e $ converges to the solution of the \ac{SHE}.

When the noise is regularized in both space and time, a non-zero, \emph{divergent} constant $ c_\e\to\infty $ arises.
Our main result states that, for a suitable and explicit choice of $ c_\e $, which depends explicitly on $\phi$, 
the solution $ u_\e $ of~\eqref{e.maineq} converges pointwisely in $ L^n(\Omega) $ to the solution of \ac{SHE}~\eqref{e.she}, for any $ n \geq 1 $.
It is a classical result that~\eqref{e.she} admits a unique (weak and mild) solution starting from $ \U(0,x) = u_0(x) $.
Also, for fixed $ \e>0 $ and for almost every realization of $ \xi_\e $,
it is standard (by Feynman--Kac formula) to show that the PDE~\eqref{e.maineq} admits a unique classical solution.

\begin{theorem}\label{t.mainth}
Let $ c_\e=c_*\e^{-1}+\tfrac{1}{2}\sigma_*^2$ with $c_*,\sigma_*$ given by \eqref{e.c1} and \eqref{e.sigma}. 
 Let $ u_\e $ and $ \U $ denote the respective solutions of~\eqref{e.maineq} and \eqref{e.she}, both with initial condition $ u_0(x) $. 
Then, for any $ (t,x)\in\R_+\times\R $ and $ n \geq 1 $, the random variable $ u_\e(t,x) $ converges in $ L^n(\Omega) $ to $ \U(t,x) $, i.e.,
\begin{align}
	\label{e.main}
	\E\big[ |u_\e(t,x)-\U(t,x)|^n\big] \to 0,
	\quad
	\text{as }\e \to 0.
\end{align}
\end{theorem}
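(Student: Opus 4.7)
The approach is a moment method based on the Feynman--Kac formula, exploiting that $\xi_\e$ is a smooth Gaussian field so the PDE~\eqref{e.maineq} has a path-integral representation. First I would write
\[
u_\e(t,x)=E_B\!\left[u_0(x+B_t)\exp\!\left(Y_\e(B)-c_\e t\right)\right],\qquad Y_\e(B):=\int_0^t \xi_\e(t-s,x+B_s)\,ds,
\]
with $B$ a Brownian motion under $E_B$, independent of $\xi$. Taking $n$ i.i.d.\ copies $B^1,\dots,B^n$ and integrating out the Gaussian $\xi$ yields the closed formula
\[
\E[u_\e(t,x)^n]=E_\otimes\!\left[\prod_{i=1}^n u_0(x+B^i_t)\exp\!\left(\tfrac12\sum_{i,j=1}^n X^\e_{ij}-n c_\e t\right)\right],
\]
where $X^\e_{ij}:=\int_0^t\!\!\int_0^t R_\e(s-s',B^i_s-B^j_{s'})\,ds\,ds'$ and $R_\e$ is the covariance of $\xi_\e$. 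The problem reduces to analyzing these Brownian double integrals as $\e\to 0$.

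I would then split the exponent into diagonal ($i=j$, singular) and off-diagonal ($i\neq j$, regular) parts. For $i\neq j$, since $R_\e$ has unit mass and concentrates at the origin and $B^i-B^j$ is nondegenerate, one expects $X^\e_{ij}\to L^{ij}_t$ in $L^p$, where $L^{ij}_t$ is the intersection local time at $0$. For $i=j$, a Brownian-scaling calculation gives $\E X^\e_{ii}=2 c_* t/\e+\sigma_*^2 t+o(1)$ with constants matching \eqref{e.c1} and \eqref{e.sigma}; a variance bound then yields the concentration $X^\e_{ii}-(2c_*t/\e+\sigma_*^2 t)\to 0$ in $L^p(E_B)$. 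Combined with uniform exponential-moment bounds on the full exponent (to justify dominated convergence), this produces
\[
\E[u_\e(t,x)^n]\longrightarrow E_\otimes\!\left[\prod_{i=1}^n u_0(x+B^i_t)\exp\!\left(\sum_{i<j}L^{ij}_t\right)\right]=\E[\U(t,x)^n],
\]
the classical moment formula for the SHE.

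To upgrade moment convergence to $L^n$ convergence, I would compute all mixed moments $\E[u_\e(t,x)^m\U(t,x)^k]$ by pairing the Feynman--Kac Brownian paths for $u_\e$ with the Wiener chaos representation of $\U$; the same intersection-local-time limit emerges, giving $\E[u_\e^m\U^k]\to\E[\U^{m+k}]$ for all $m,k$. Expanding $\E[(u_\e-\U)^{2n}]=\sum_k\binom{2n}{k}(-1)^k\E[u_\e^{2n-k}\U^k]\to 0$ then gives \eqref{e.main} for even powers, and general $n\ge 1$ follows by H\"older.

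The main obstacle I anticipate is the diagonal analysis: extracting the finite correction $\sigma_*^2$ requires showing that $X^\e_{ii}-\E X^\e_{ii}\to 0$ in $L^p(E_B)$ sharply enough, and careful bookkeeping of the boundary effects from the finite window $[0,t]^2$ which contribute at the $O(1)$ level. One must also establish uniform exponential moment bounds on $\tfrac12\sum_i X^\e_{ii}-nc_\e t$ (across Brownian realizations) so that dominated convergence legitimately interchanges $\e\to 0$ with $E_\otimes$. A secondary technical point is setting up the joint framework for the mixed moments, i.e.\ producing a chaos-level description of $\U$ compatible with the polymer representation of $u_\e$ so that $\E[u_\e^m\U^k]$ can be analyzed by the same Brownian pairing.
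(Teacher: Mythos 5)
There is a genuine gap, and it sits exactly where you anticipated trouble: the diagonal terms. Your claim that $\E X^\e_{ii}=2c_*t/\e+\sigma_*^2t+o(1)$ and that a variance bound gives $X^\e_{ii}-\E X^\e_{ii}\to 0$ in $L^p(E_B)$ is false on both counts. A Brownian-scaling computation (write the double integral in microscopic coordinates $s\mapsto \e^{-2}s$) shows the mean is $2c_*t/\e+O(\e)$ — there is no $\sigma_*^2 t$ term in the mean — and the variance of the centered diagonal functional does \emph{not} vanish: it converges to a positive constant (of order $\sigma_*^2t$). Indeed, the centered process $X_\e(t)=\int_0^t\int_0^s R_\e(s-u,B(s)-B(u))\,du\,ds-\E_B[\cdots]$ is, after rescaling, $\e\int^{\e^{-2}t}$ of a bounded, stationary, finite-range functional of $B$, so it satisfies a functional CLT and converges weakly to $\sigma_*W(t)$ with $W$ a nondegenerate Brownian motion. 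This is precisely why the renormalization constant contains the extra $\tfrac12\sigma_*^2$: the exponent $\tfrac12X^\e_{ii}-c_\e t$ converges in law to $\sigma_*W(t)-\tfrac12\sigma_*^2t$, whose exponential has mean one, rather than converging to $0$. Your limiting moment formula happens to be correct, but the mechanism is lognormal cancellation, not concentration; to make the argument work one must prove \emph{joint} weak convergence of $(B^1,\dots,B^n,\{X^\e_{ii}\}_i,\{X^\e_{ij}\}_{i<j})$ in which the diagonal fluctuations converge to Gaussians \emph{independent} of the Brownian paths (hence of the intersection local times), so that they can be averaged out. Establishing this joint convergence with independence is the technical heart of the paper (a Clark--Ocone/martingale-CLT argument for the diagonal part, combined with a Tanaka-formula approximation to carry the local time along), and nothing in your proposal supplies a substitute for it.

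A secondary, smaller gap is the upgrade from moments to $L^n$ convergence. Convergence of $\E[u_\e^n]$ alone does not identify the limit; the decisive quantity is the cross term $\E[u_\e\,\U]$, and your plan to compute it by ``pairing the Feynman--Kac paths with the chaos representation of $\U$'' is exactly where the real work lies — it amounts to proving convergence of the Wiener-chaos coefficients of $u_\e$ to those of $\U$ (or, as the paper does instead, proving $\{u_\e\}$ is Cauchy in $L^2(\Omega)$ via the two-parameter moments $\E[u_{\e_1}u_{\e_2}]$, which stays entirely within the Brownian framework, and then identifying the limit by testing the chaos coefficients against smooth functions). Either route again requires the joint weak convergence with independence described above, so this step cannot be treated as routine bookkeeping; once you have it, the reduction from $L^2$ to $L^n$ does follow, as you say, from uniform exponential-moment bounds and uniform integrability.
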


As mentioned earlier, analogs of Theorem~\ref{t.mainth} have already been established in different settings.
Hairer and Pardoux \cite{Hairer15a} established the Wong--Zakai theorem for a general class of semi-linear equations on the torus,
with the \ac{SHE} being a special case.
This result was later extended to non-Gaussian noise by Chandra and Shen \cite{Chandra17},
and the problem on the whole line $ \R $ was studied by Hairer and Labb{\'e} \cite{Hairer15}.
In a related direction, Bailleul, Bernicot, and Frey \cite{Bailleul17}
have studied similar stochastic PDEs via paracontrolled calculus.

All the aforementioned works build on the recently developed theory of regularity structure and paracontrolled calculus \cite{Hairer14,Gubinelli15}.
%
In this article, we present a more probabilistic proof of Theorem~\ref{t.mainth}.
The proof is short, entirely contained within the scope of classical stochastic analysis.
This offers a different perspective of the Wong--Zakai theorem for the \ac{SHE}.
For example, the renormalizing constant $ c_\e $ is identified in terms of the first and second moments of certain additive functionals
of Brownian motions. 
See the discussion in Section~\ref{s.BM}. 

\rev{%
A related question of interest concerns homogenization of $ \partial_t u_\e=\tfrac12\partial_{xx} u_\e+ \sqrt{\e}u_\e\xi_\e $.
For a general class of mixing random potentials, a homogenization result was established by Pardoux and Piatnitski \cite{Pardoux12}.
We show in Appendix~\ref{s.transition} how our approach can be adopted to establish homogenization.
In fact, we will establish a homogenization result over a range of scales $ \alpha \in [1,2) $,
together with a Gaussian fluctuation result within this range.
}


\subsection*{Outline and conventions}
In Section~\ref{s.BM}, we use the Feynman--Kac formula to analyze moments of $ u_\e(t,x) $.
These formula are expressed in terms of functionals of some auxiliary Brownian motions. 
We then establish various properties of these functionals. 
In Section~\ref{s.proof}, we prove Theorem~\ref{t.mainth} using the results established in Section~\ref{s.BM}.
This is done by first showing that $u_\e(t,x)$ is a Cauchy sequence in $ L^2(\Omega) $,
and then identifying the limit via a Wiener chaos expansion. In Appendix~\ref{s.transition}, we will discuss the homogenization result, prove a central limit theorem, and discuss the transition from the Edwards-Wilkinson to the SHE fluctuations.

Throughout the paper, we denote the Fourier transform of $f$ by 
\begin{align*}
	\hat{f}(\xi)=\int_{\R}f(x)e^{-i\xi x}dx.
\end{align*}
We use $ C(a_1,\ldots) $ to denote a generic, deterministic, finite constant
that may change from line to line, but depends only on the designated variables $ a_1,\ldots $.
This is \emph{not} to be confused with the renormalization constant $ c_\e $. 
Also, we use $r_\e= (r_\e(t))_{t\geq0} $ to denote a \emph{generic} (random) process, that uniformly converges to zero, i.e.,
\begin{align}
	\label{e.rmd}
	\sup_{t\in\R_+}|r_\e(t)| \leq h_\e \longrightarrow 0,
	\quad
	\text{for some deterministic } h_\e.
\end{align} 

\subsection*{Acknowledgments}
%
YG was partially supported by the NSF through DMS-1613301/1807748 and the Center for Nonlinear Analysis of CMU.
LCT was partially supported by a Junior Fellow award from the Simons Foundation,
and by the NSF through DMS-1712575. We thank the anonymous referee for a very careful reading of the manuscript and many helpful suggestions to improve the presentation.

\section{Feynman--Kac Formula and Brownian Functionals}\label{s.BM}
A main tool in this article is the Feynman--Kac formula, which expresses the solution of the PDE~\eqref{e.maineq} as
\begin{equation}\label{e.fk}
	u_\e(t,x)=\E_B\Big[ u_0(x+B(t))\exp\Big(\int_0^t \xi_\e(t-s,x+B(s))ds-c_\e t \Big) \Big].
\end{equation}
Here $ B(t) $ is a standard Brownian motion starting from the origin, \emph{independent} of the driving noise $ \xi $.
In fact, we will be considering several independent Brownian motions. 
We expand the probability space $ (\Omega,\fil,\P) $
to a larger one $(\Omega\times\Sigma,\fil\times\fil_{B},\P\otimes\P_B)$ 
to include several independent Brownian motions $ B $, $ B_1 $, $ B_2 $,\ldots, $ W, W_1, W_2,\ldots $, independent of $ \xi $.
We will use $ \E_B $ to denote the expectation on $\Sigma$. Also, we will often work with the marginal probability space $ (\Omega,\fil,\P) $ or $ (\Sigma,\fil_B,\P_B) $.

Several functionals of the Brownian motions enter our analysis via \eqref{e.fk}.
More precisely, the $ n $-th moment of $ u_\e(t,x) $ is expressed in terms of functionals of Brownian motions.
We begin with the first moment.
To this end, define the covariance function
\begin{align*}
	R(t,x):= \int_{\R^2} \phi(t-s,x-y) \phi(-s,-y) dsdy,
	\quad
	R_\e(t,x):=\tfrac{1}{\e^3}R(\tfrac{t}{\e^2},\tfrac{x}{\e}) = \E[\xi_\e(t,x)\xi_\e(0,0)].
	\quad
\end{align*}
It is clear that $R$ is an even function. Recall that $ \phi $ is compactly supported.
Without loss of generality, throughout this article we assume that $ \phi $ is supported in $ (-\frac12,\frac12) $ in $ t $,
i.e.,  $ \phi(t,\Cdot)=0$, $|t|\geq \frac12$, and hence $ R(t,\Cdot)=0 $, $ |t|\geq1 $.
With $ \xi_\e(t,x) $ being a Gaussian process, averaging over $ \xi $ in \eqref{e.fk} gives
\begin{align}
\begin{split}
	\E [u_\e(t,x)] &=\E_B\Big[ u_0(x+B(t)) \exp\Big( \frac{1}{2}\E\Big[\Big(\int_0^t \xi_\e(t-s,x+B(s))ds\Big)^2\Big]-c_\e t\Big) \Big]
\\
	\label{e.1mom.}
	&=
	\E_B\Big[ u_0(x+B(t)) \exp\Big( \int_0^t \int_0^s R_\e(s-u,B(s)-B(u)) duds-c_\e t \Big) \Big].
\end{split}
\end{align}

Before progressing to the formula for higher moments,
let us use \eqref{e.1mom.} to explain how the renormalizing constant $ c_\e $ comes into play.
To this end, take $ u_0(x)= 1 $ for simplicity.
In this case the solution $ \U $ of the limiting \ac{SHE}~\eqref{e.she} satisfies $ \E[\,\U(t,x)]= 1 $.
For the convergence in \eqref{e.main} to hold, we must choose $ c_\e $ so that
\begin{align*}
	\E_B\Big[ \exp\Big( \int_0^{t} \int_0^s R_\e(s-u,B(s)-B(u)) duds-c_\e t \Big) \Big]\longrightarrow  1,
	\quad
	\text{as } \e \to 0.
\end{align*}
To this end, consider the centered double-integral process
\begin{align}
	\label{e.xeps}
	X_{\e}(t)&:=\int_0^{t}\int_0^s R_\e(s-u,B(s)-B(u))duds- \int_0^{t}\int_0^s \E_B[R_\e(s-u,B(s)-B(u))]duds.
\end{align}
It is more convenient to express $ X_\e $ in `microscopic' coordinates.
That is, we use the scaling property $  (\e^{-1} B(\e^{2}t))_{t\geq 0} \stackrel{\text{law}}{=} (B(t))_{t\geq 0}  $ to write
\begin{align}
	\notag
	X_\e(t) \stackrel{\text{law}}{=} \Xm_\e(t)
	&:=\e \int_0^{\e^{-2}t}\int_0^s R(s-u,B(s)-B(u))duds-\e \int_0^{\e^{-2}t}\int_0^s \E_B[R(s-u,B(s)-B(u))]duds
\\
	\label{e.xm}
	&=
	\e\int_0^{\e^{-2}t} \left(\int_0^s \big(R(u,B(s)-B(s-u))-\E_B[R(u,B(s)-B(s-u))]\big)du\right) ds.
\end{align}
Since $R(u,x)=0$ whenever $|u|\geq 1$, the $ u $-integral in~\eqref{e.xm} goes over $ u\in[0,1] $ for all $ s \geq 1 $.
Dropping those values of $ s<1 $ in the integral gives
\begin{align}
	\label{e.xm.int}
	\Xm_{\e}(t)=\e\int_1^{\e^{-2}t}  \calXm(s) ds + r_\e(t),
\end{align}
where, recall that, $ r_\e(t) $ denotes a generic process satisfying~\eqref{e.rmd},
\begin{align}
	\label{e.calxm}
	\calXm(s) &:= \int_0^\infty R(u,B(s)-B(s-u)) du - c_*,
\\	
	\label{e.c1}
	c_* &:= \E_B \Big[ \int_0^{\infty} R(u,B(u)) du  \Big]
	=
	\E_B \Big[ \int_0^{1} R(u,B(s)-B(s-u)) du  \Big],
\end{align}
and we take $ B $ to be a two-sided Brownian motion in~\eqref{e.calxm}--\eqref{e.c1}
so that the resulting expression is defined for all $ s \geq 0 $ (\emph{including} $ s\in[0,1] $).
It is straightforward to verify that
\begin{align}
	\label{e.calxm.station}
	&\{\calXm(s)\}_{s \geq 0} \text{ is stationary in $s$, bounded, } \E_B[\calXm(s)]=0,
\\
	\label{e.calxm.finitRg}
	&
	\text{with }
	(\calXm(s))_{s \geq s_0} , (\calXm(s))_{s \leq s'_0} \text{ being independent whenever } s_0-s_0' \geq 1 .
\end{align}
Thus it is natural to expect $ X_{\e}(t) $ to converges to $ \sigma_* W(t) $, with $W$ being a standard Brownian motion,
and 
\begin{align}
	\label{e.sigma}
	\sigma^2_* := 2\,\E_B\Big[\int_0^\infty \calXm(s)\calXm(0) ds\Big].
\end{align}
In light of these discussions, we find that $ c_\e := c_*\e^{-1} + \tfrac{1}{2}\sigma_*^2 $ (as in Theorem~\ref{t.mainth})
is the reasonable choice in order for $ u_\e $ to converge to the solution $ \U $ to the~\ac{SHE}.
\rev{\begin{remark}
The constants $ c_* $ and $ \sigma_*^2 $, defined in \eqref{e.c1} and \eqref{e.sigma}, 
can also be expressed in terms of integrals involving the covariance function $ R $ and the heat kernel. 
For example,
\begin{align*}
	c_* =  \int_0^{\infty} \E_B \Big[ R(u,B(u)) \Big] du = \int_0^{\infty} \int_\R R(u,x) \frac{1}{\sqrt{2\pi u}} e^{-\frac{x^2}{2u}} dx du.
\end{align*}
Also, the process $ X_\e(t) $ can be expressed as
\begin{align*}
	X_\e(t) = \int_0^{t}\int_0^s R_\e(s-u,B(s)-B(u))duds-\frac{c_*t}{\e}+r_\e(t).
\end{align*}
\end{remark}}

With $ X_\e(t) $ and $ \sigma_* $ defined in the preceding, we rewrite the formula~\eqref{e.1mom.} in a more compact form as
\begin{align}
	\label{e.1mom}
	\E [u_\e(t,x)] 
	&=
	\E_B\Big[ u_0(x+B(t)) \exp\Big( X_\e(t) - \tfrac{1}{2}\sigma_*^2 t + r_\e(t) \Big) \Big].
\end{align}
Similar calculations give formulas of higher moments:
\begin{align}
	\label{e.nmom}
	\E [u_\e(t,x)^n] 
	&=
	\E_B\Big[ \prod_{j=1}^n u_0(x+ B_j(t)) 
	\exp\Big( \sum_{j=1}^n\Big(X_{j,\e}(t) - \frac{1}{2}\sigma_*^2 t + r_{\e}(t) \Big)
		+\sum_{1\leq i<j\leq n} Y_{i,j,\e}(t)
	\Big) \Big].
\end{align}
Here $ B_1,\ldots,B_n $ are independent Brownian motions;
the process $ X_{j,\e}(t) $ is obtained by replacing $ B $ with $ B_j $ in~\eqref{e.xeps};
and $ Y_{i,j,\e}(t) $ is given by
\begin{align}
	\label{e.ye}
	Y_{i,j,\e}(t)&:=\int_0^t \int_0^tR_{\e}(s-u,B_i(s)-B_j(u))dsdu.
\end{align}
Indeed, $ (X_{j,\e}(t))_{t \geq 0} \stackrel{\text{law}}{=} (X_{\e}(t))_{t \geq 0} $.
Likewise, writing $ Y_{\e}(t) := Y_{1,2,\e}(t) $, we have
$ (Y_{i,j,\e}(t))_{t \geq 0} \stackrel{\text{law}}{=} (Y_{\e}(t))_{t \geq 0} $, for all $ i<j $.

We will also need to consider $ \E[u_{\e_1}(t,x)u_{\e_2}(t,x)] $, i.e., the second moment calculated at different values of $ \e $.
A similar calculation gives the formula
\begin{align}
	\label{e.2mom}
	\E[u_{\e_1}(t,x)u_{\e_2}(t,x)]
	=
	\E_B\Big[ \prod_{j=1}^2 u_0(x+ B_j(t)) 
	\exp\Big( Y_{\e_1,\e_2}(t)+\sum_{j=1}^2\Big(X_{j,\e_j}(t) - \frac{1}{2}\sigma_*^2 t + r_{\e_j}(t) 
	\Big) \Big],
\end{align}
where
\begin{align}
	\label{e.yeps}
	Y_{\e_1,\e_2}(t)&:=\int_0^t\int_0^tR_{\e_1,\e_2}(s-u,B_1(s)-B_2(u))dsdu,
\\
	\notag
 	R_{\e_1,\e_2}(t,x) &:= \int_{\R^2}\phi_{\e_1}(t-s,x-y) \phi_{\e_2}(-s,-y) dsdy = \E[\xi_{\e_1}(t,x)\xi_{\e_2}(0,0)] .
\end{align}

\subsection{Exponential moments}
We first establish bounds on exponential moments of $ X_\e(t) $ and $ Y_{\e_1,\e_2}(t) $.
\begin{proposition}\label{p.expin}
For any $\lambda,t>0$, we have 
\[
\sup_{\e\in(0,1)}\E_B[ e^{\lambda X_\e(t)}]+\sup_{\e_1,\e_2\in(0,1)} \E_B[e^{\lambda Y_{\e_1,\e_2}(t)}]<\infty.
\]
\end{proposition}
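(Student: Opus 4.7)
\emph{Bound on $X_\e(t)$.} I would exploit the stationarity and unit-range dependence of $\calXm$ recorded in \eqref{e.calxm.station}--\eqref{e.calxm.finitRg}. Via the microscopic representation \eqref{e.xm.int}, and after absorbing the uniformly-bounded remainder $r_\e$, the task reduces to bounding $\E_B\bigl[\exp(\lambda\e\int_1^{\e^{-2}t}\calXm(s)\,ds)\bigr]$. Partition $[1,\e^{-2}t]$ into consecutive unit intervals and set $I_k:=\int_k^{k+1}\calXm(s)\,ds$. By \eqref{e.calxm.station}--\eqref{e.calxm.finitRg}, each $I_k$ is bounded and mean-zero, while the sub-families $\{I_{2k}\}$ and $\{I_{2k+1}\}$ are each i.i.d.\ Separating the two parities by Cauchy--Schwarz and then applying Hoeffding's inequality to bounded mean-zero i.i.d.\ variables gives $\E_B[e^{2\lambda\e I_k}]\le e^{C\lambda^2\e^2}$; taking the product over the $O(\e^{-2}t)$ indices in each parity yields $\E_B[e^{\lambda X_\e(t)}]\le e^{C\lambda^2 t}$, uniformly in $\e$.

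\emph{Bound on $Y_{\e_1,\e_2}(t)$.} Since $Y_{\e_1,\e_2}(t)\ge 0$, I would estimate all polynomial moments and sum the Taylor series. By Fubini,
\[
\E_B[Y_{\e_1,\e_2}(t)^n]=\int_{[0,t]^{2n}}\int_{\R^n}\prod_{k=1}^n R_{\e_1,\e_2}(s_k-u_k,x_k)\,\rho_{s,u}(x)\,dx\,\prod_k ds_k\,du_k,
\]
where $\rho_{s,u}$ is the joint Gaussian density of $(B_1(s_k)-B_2(u_k))_{k=1}^n$, with covariance $\Sigma=A+A'$, $A_{kl}=\min(s_k,s_l)$ and $A'_{kl}=\min(u_k,u_l)$, both positive semi-definite. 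By monotonicity of the determinant on the PSD cone, $\det(\Sigma)\ge\det(A)$, so $\|\rho_{s,u}\|_\infty\le(2\pi)^{-n/2}\det(A)^{-1/2}$. Bounding the inner $x$-integral by $\|\rho_{s,u}\|_\infty\prod_k\bar R_{\e_1,\e_2}(s_k-u_k)$ with $\bar R(\tau):=\int R(\tau,x)\,dx$, then integrating out each $u_k$ via $\int\bar R_{\e_1,\e_2}(\tau)\,d\tau=1$ and symmetrizing over orderings of the $s_k$'s, the standard Dirichlet integral yields
\[
\E_B[Y_{\e_1,\e_2}(t)^n]\le\frac{n!}{(2\pi)^{n/2}}\int_{0<s_1<\cdots<s_n<t}\prod_{k=1}^n(s_k-s_{k-1})^{-1/2}\,ds=\frac{n!\,t^{n/2}}{2^{n/2}\,\Gamma(n/2+1)}.
\]
Summing $\sum_n\lambda^n\E_B[Y^n]/n!\le\sum_n(\lambda\sqrt{t/2})^n/\Gamma(n/2+1)<\infty$ (the even part is $e^{\lambda^2t/2}$) gives the bound uniformly in $\e_1,\e_2\in(0,1)$.

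\emph{Main obstacle.} The delicate step is the $Y$-estimate, specifically the singularity of $\rho_{s,u}$ when several $s_k$'s cluster: no uniform pointwise bound on $R_{\e_1,\e_2}$ can absorb it, since $\|R_{\e_1,\e_2}\|_\infty$ diverges as $\e_i\to 0$. What saves the argument is the combination of the determinant monotonicity $\det(A+A')\ge\det(A)$ with the unit $L^1$-mass of $\bar R_{\e_1,\e_2}$ gained from the $u_k$-integration; the measure-zero locus where $\Sigma$ is degenerate can be treated by a standard regularization $\Sigma\rightsquigarrow\Sigma+\delta I$, $\delta\to 0^+$.
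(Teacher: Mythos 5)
Your proposal is correct. For the $X_\e$ part you follow essentially the paper's route: microscopic coordinates, unit-block decomposition $\tilde{\mathcal{X}}_k=\int_k^{k+1}\calXm(s)\,ds$, even/odd separation by Cauchy--Schwarz, and then the i.i.d.\ structure from the finite range of dependence together with boundedness and mean zero (the paper simply bounds $\big(\E_B[e^{2\lambda\e\tilde{\mathcal{X}}_1}]\big)^{[t/\e^2]-1}$; your Hoeffding step makes the same estimate explicit).

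For the $Y_{\e_1,\e_2}$ part you take a genuinely different, though parallel, route. The paper passes to Fourier variables in $x$, bounds the $u$-integral (the $B_2$ side) by $1$ in absolute value using $\int_{[0,t]\times\R}R_{\e_1,\e_2}\,du\,dx\le 1$, and then recognizes the remaining integral as the $n$-th moment of the Brownian local time $L(t,0;B_1)$, so that $\E_B[e^{\lambda Y_{\e_1,\e_2}(t)}]\le\E_B[e^{\lambda L(t,0;B_1)}]<\infty$ is quoted as a known fact. You instead stay in physical space: you write $\E_B[Y_{\e_1,\e_2}(t)^n]$ against the joint Gaussian density of $(B_1(s_k)-B_2(u_k))_k$ with covariance $A+A'$, use the determinant monotonicity $\det(A+A')\ge\det(A)$ on the PSD cone to discard the $B_2$ contribution, integrate out the $u_k$ via the unit $L^1$-mass of $R_{\e_1,\e_2}$ (nonnegativity of $R_{\e_1,\e_2}$, which justifies Tonelli and your moment-summation, holds since $\phi\ge0$), and evaluate the resulting Dirichlet integral over the ordered simplex. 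The two arguments exploit the same structural facts and in fact yield the same numerical bound, since your $\frac{n!\,t^{n/2}}{2^{n/2}\Gamma(n/2+1)}$ equals $\E_B[L(t,0;B_1)^n]$ by the duplication formula; your version is more self-contained (no appeal to exponential integrability of local time, and no Fourier step), while the paper's is shorter by invoking the local-time identity. Your handling of the degenerate locus of the covariance by a $\delta I$ regularization is fine, since that locus has measure zero in $(s,u)$.
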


\begin{proof}
For $X_\e(t)$, we appeal to the microscopic coordinates,
using~\eqref{e.xm}--\eqref{e.xm.int} to write
\begin{align*}
	X_\e(t)\stackrel{\text{law}}{=} \Xm_\e(t)
	=\e\int_1^{\e^{-2}t}  \calXm(s) ds + r_\e(t)
	=\e\int_1^{[\e^{-2}t]}  \calXm(s) ds + r_\e(t).
\end{align*}
In view of the finite range property~\eqref{e.calxm.finitRg} of $ \calXm $, we decompose 
\begin{align*}
	\Xm_\e(t)
	=
	\e \sum_{k\in I_\text{even}} \tilde{\calXm}_k +\e \sum_{k \in I_\text{odd}}\tilde{\calXm}_k,
\end{align*}
where $  \tilde{\calX}_k:=\int_{k}^{k+1}\calXm(s)ds $,
and $ I_\text{even} := \{ 1\leq k \leq [\e^{2}t]-1, \text{ even} \} $,
and $ I_\text{odd} := \{ 1\leq k \leq [\e^{2}t]-1, \text{ odd} \} $.
This gives
\begin{align*}
	\E_B[e^{\lambda X_\e(t)}]
	&= 	
	\E_B\Big[e^{\lambda r_\e(t)} \exp\Big(\lambda \e \sum_{k\in I_\text{even}}\tilde{\mathcal{X}}_k\Big)\exp\Big(\lambda \e \sum_{k\in I_\text{odd}}\tilde{\mathcal{X}}_k\Big)\Big]
\\
	&\leq C(\lambda,t) \sqrt{\E_B\Big[ \exp\Big(2\lambda \e \sum_{k\in I_\text{even}}\tilde{\mathcal{X}}_k\Big)\Big] \E_B\Big[\exp\Big(2\lambda \e \sum_{k\in I_\text{odd}}\tilde{\mathcal{X}}_k\Big)\Big]}
	= C(\lambda,t) \sqrt{
			\prod_{k \leq [t/\e^{2}]-1 }\E_B[ e^{2\lambda \e \tilde{\mathcal{X}}_k}]
		}.
\end{align*}
By~\eqref{e.calxm.station}, we know that $ \tilde{\calXm}_k $ is stationary with zero mean and is also uniformly bounded since $R\in C_c^\infty$, so
\begin{align*}
	\prod_{k \leq [t/\e^{2}]-1 }\E_B[ e^{2\lambda\e \tilde{\mathcal{X}}_k}]
	=
	\Big( \E_B[ e^{2\lambda\e \tilde{\mathcal{X}}_1}] \Big)^{[t/\e^{2}]-1}
	\leq
	C(\lambda,t).
\end{align*}
From this we conclude the desired exponential moment bound on $ X_\e(t) $:
\begin{align*}
	\sup_{\e\in(0,1)}\E_B[e^{\lambda X_\e(t)}] \leq C(\lambda, t)<\infty.
\end{align*}

We now turn to $Y_{\e_1,\e_2}(t)$. 
Denoting by $\hat{R}_{\e_1,\e_2}$ the Fourier transform of $R_{\e_1,\e_2}$in the $x$-variable,
we express $ Y_{\e_1,\e_2}(t) $ via Fourier transform as
\begin{align*}
	Y_{\e_1,\e_2}(t)=\int_{[0,t]^2} \left(\int_\R (2\pi)^{-1}\hat{R}_{\e_1,\e_2}(s-u,\xi)e^{\img\xi (B_1(s)-B_2(u))}d\xi\right) dsdu.
\end{align*}
Note that $Y_{\e_1,\e_2}\geq0$, so from the above expression we calculate the $ n $-th moment of $ Y_{\e_1,\e_2}(t) $ as 
\begin{align}
\label{e.Ynthmom}
\begin{split}
	\E_B[Y_{\e_1,\e_2}(t)^n]
	=
	\int_{[0,t]^{2n}\times\R^n}
	\prod_{j=1}^n &\big( (2\pi)^{-1}\hat{R}_{\e_1,\e_2}(s_j-u_j,\xi_j) \big)
\\
	&\times\E_B\Big[ \prod_{j=1}^ne^{\img\xi_jB_1(s_j)}\Big] \E_B\Big[ \prod_{j=1}^n e^{-\img\xi_j B_2(u_j)}\Big]
	ds d u d\xi.
\end{split}
\end{align}
Let us first focus on the integral over $ u\in[0,t]^n $. We write
\begin{align*}
	\int_{[0,t]^n} 
		\prod_{j=1}^n \hat{R}_{\e_1,\e_2}(s_j-u_j,\xi_j)
		\E_B\Big[ \prod_{j=1}^n e^{-\img\xi_j B_2(u_j)}\Big] du
	=
	\int_{[0,t]^n\times\R^n} 
		\prod_{j=1}^n R_{\e_1,\e_2}(s_j-u_j,x_j) e^{-\img \xi_jx_j}
		\E_B\Big[ \prod_{j=1}^n e^{-\img\xi_j B_2(u_j)}\Big]
	dudx.
\end{align*}
The exponents are purely imaginary.
We hence bound those exponentials by $ 1 $ in absolute value, 
and use $ 0\leq \int_{[0,t]\times\R} R_{\e_1,\e_2}(s-u,x) dudx \leq 1 $ to get
\begin{align*}
	\Big|
	\int_{[0,t]^n} 
		\prod_{j=1}^n \hat{R}_{\e_1,\e_2}(u_j-s_j,\xi_j)
		\E_B\Big[ \prod_{j=1}^n e^{-\img\xi_j B_2(u_j)}\Big] du
	\Big|
	\leq
	1.
\end{align*}
Inserting this into~\eqref{e.Ynthmom} gives
\begin{align}
	\label{e.Ynthmom.}
	\E_B[Y_{\e_1,\e_2}(t)^n]
	\leq
	(2\pi)^{-n}
	\int_{[0,t]^{n}\times\R^n} \hspace{-3pt}  \E_B\Big[ \prod_{j=1}^ne^{\img\xi_jB_1(s_j)}\Big] ds d\xi.
\end{align}

The last integral in~\eqref{e.Ynthmom.} is in fact the $ n $-th moment of Brownian localtime at the origin.
More precisely, let $ L(t,x;B) $ denote the localtime process of a Brownian motion $ B $, it is a standard result that
\begin{align}
	\label{e.localnmom}
	(2\pi)^{-n}
	\int_{[0,t]^{n}\times\R^n} \hspace{-8pt} \E_B\Big[ \prod_{j=1}^ne^{\img\xi_jB_1(s_j)}\Big] ds d\xi
	= \E_{B} \Big[ L(t,0;B_1)^n \Big].
\end{align}
Informally speaking, this formula is obtained by interpreting $ L(t,0;B_1) $ as $ \int_0^t \delta(B_1(s)) ds $,
where $ \delta(\Cdot) $ denotes the Dirac function, and taking Fourier transform, similarly to the preceding.
The prescribed informal procedure is rigorously implemented by taking a sequence approximating the Dirac function.
We omit the details here as the argument is standard.

Now, combine~\eqref{e.Ynthmom.}--\eqref{e.localnmom}, and sum over $ n \geq 0 $. We arrive at
$ 	
	\E_B[e^{\lambda Y_{\e_1,\e_2}(t)}]
	\leq
	\E_B[e^{\lambda L(t,0;B_1)}].
$
As the Brownian localtime has finite exponential moments, i.e., $ \E[e^{\lambda L(t,0;B_1)}]<\infty $ for any $\lambda,t>0$,
we obtain the desired exponential moment bound on $ Y_{\e_1,\e_2}(t
) $:
\begin{align*}
	\sup_{\e_1,\e_2\in(0,1)}\E_B[e^{\lambda Y_{\e_1,\e_2}(t)}] \leq C(\lambda,t)<\infty.
\end{align*}
This completes the proof.
\end{proof}

\subsection{Weak convergence}
In this section, we derive the distributional limit of $ X_{j,\e} $ and $ Y_{\e_1,\e_2} $.
First, since the covariance function $ R_{\e_1,\e_2}(t,x) $ converges to Dirac function $ \delta(t)\delta(x) $,
we expect the process $ Y_{\e_1,\e_2}(t) $ (defined in~\eqref{e.yeps}) to converge to the mutual intersection localtime of $ B_1 $ and $ B_2 $.
More precisely, recalling that $ L(t,x;B) $ denote the localtime process of a Brownian motion $ B $,
we define the mutual intersection localtime of $ B_1 $ and $ B_2 $ as
\begin{equation}
	\label{e.localtime}
	\ell(t):=L(t,0;B_1-B_2).
\end{equation}
\begin{proposition}\label{p.ycnvg}
For any fixed $t>0$, $ Y_{\e_1,\e_2}(t) \to \ell(t) $ in $ L^2(\Sigma) $, as $\e_1,\e_2\to0 $.
\end{proposition}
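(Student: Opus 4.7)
The plan is polarization: expand
\[
	\E_B\big[(Y_{\e_1, \e_2}(t) - \ell(t))^2\big] = \E_B[Y_{\e_1, \e_2}(t)^2] - 2\, \E_B[Y_{\e_1, \e_2}(t)\, \ell(t)] + \E_B[\ell(t)^2],
\]
and show that each of the first two terms converges to $\E_B[\ell(t)^2]$, which is finite since $B_1 - B_2$ is a one-dimensional Brownian motion with a square-integrable local time. Since $Y_{\e_1, \e_2}(t) \geq 0$ and $\ell(t) \geq 0$, this delivers the desired $L^2$ convergence.

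To compute these three second moments, I would mimic the Fourier approach used in the proof of Proposition~\ref{p.expin}. This yields
\[
	\E_B[Y_{\e_1, \e_2}(t)^2] = (2\pi)^{-2}\!\int_{[0,t]^4 \times \R^2} \hat R_{\e_1,\e_2}(s_1 - u_1, \xi_1)\, \hat R_{\e_1,\e_2}(s_2 - u_2, \xi_2)\, \Phi_1(\xi, s)\, \Phi_2(\xi, u)\, ds\, du\, d\xi,
\]
where $\Phi_1(\xi, s) := \E_B[e^{\img \xi_1 B_1(s_1) + \img \xi_2 B_1(s_2)}]$ and $\Phi_2(\xi, u) := \E_B[e^{-\img \xi_1 B_2(u_1) - \img \xi_2 B_2(u_2)}]$ are explicit real, positive Gaussian factors. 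Approximating $\delta$ by a mollifier and passing to the limit, $\E_B[\ell(t)^2]$ and $\E_B[Y_{\e_1,\e_2}(t)\, \ell(t)]$ admit analogous representations in which, respectively, both or one of the $\hat R_{\e_1,\e_2}$ factors are replaced by $\delta(s_j - u_j)$.

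For pointwise convergence of the integrands, I would use that $R_{\e_1,\e_2}$ is a nonnegative probability density on $\R^2$ concentrating at the origin, so its spatial Fourier transform satisfies $\hat R_{\e_1,\e_2}(\cdot, \xi) \to \delta$ weakly in the time variable for each fixed $\xi$; this collapses each $u_j$-integration onto $u_j = s_j$ and matches the representation of $\E_B[\ell(t)^2]$. The main obstacle is justifying the interchange of limits, and the key tool is the bound
\[
	|\hat R_{\e_1, \e_2}(\tau, \xi)| \leq \hat R_{\e_1, \e_2}(\tau, 0) =: \rho_{\e_1, \e_2}(\tau),
\]
valid by $R_{\e_1,\e_2}\geq 0$, with $\rho_{\e_1, \e_2}$ a probability density in $\tau$. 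Combining this with the crude bound $|\Phi_2| \leq 1$ and integrating out the $u_j$'s via $\int_0^t \rho_{\e_1,\e_2}(s_j - u_j)\, du_j \leq 1$ leaves the $\e$-uniform envelope $\int_{[0, t]^2 \times \R^2} \Phi_1(\xi, s)\, ds\, d\xi$, whose finiteness reduces to the same explicit one-dimensional Gaussian computation underlying $\E_B[\ell(t)^2] < \infty$. Dominated convergence then closes the argument, and the cross-term $\E_B[Y_{\e_1, \e_2}(t)\, \ell(t)]$ is handled by the identical scheme with one $\hat R_{\e_1,\e_2}$ replaced by $\delta$.
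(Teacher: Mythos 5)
Your proposal is correct, and it takes a somewhat different route from the paper. The paper first introduces the equal-time modification $\tilde{Y}_{\e_1,\e_2}(t)=\int_0^t\int_0^t R_{\e_1,\e_2}(s-u,B_1(s)-B_2(s))\,ds\,du$, proves $\tilde{Y}_{\e_1,\e_2}(t)\to\ell(t)$ almost surely via the occupation-time formula and the continuity of $x\mapsto L(t,x;B_1-B_2)$, upgrades this to $L^2(\Sigma)$ using the uniform exponential moment bound, and then shows $\E_B[Z_1Z_2]$ converges to a common limit for every choice $Z_1,Z_2\in\{Y_{\e_1,\e_2}(t),\tilde Y_{\e_1,\e_2}(t)\}$, so that $Y-\tilde Y\to 0$ in $L^2$. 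You bypass $\tilde Y$ and the pathwise step entirely by polarization, computing $\E_B[Y^2]$, $\E_B[Y\ell]$, $\E_B[\ell^2]$ directly and showing the first two converge to the third; the identification of the limit as $\ell(t)$ is then carried by the standard Fourier/mollifier representation of local-time moments (the same identity the paper invokes as ``standard'' in \eqref{e.localnmom}), and your cross term indeed goes through by the same scheme after approximating $\ell$ by $\int_0^t\varphi_\delta(B_1(r)-B_2(r))\,dr$. The analytic core is identical in both arguments: the bound $|\hat R_{\e_1,\e_2}(\tau,\xi)|\le \hat R_{\e_1,\e_2}(\tau,0)$ with unit mass in $\tau$, the crude bound on the $B_2$-factor, and the resulting $\e$-uniform envelope whose $\xi$-integral is $C/\sqrt{(s\wedge s')|s-s'|}\in L^1([0,t]^2)$, exactly the paper's dominating function. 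Two small points of care: the pointwise convergence you invoke is for the quantity \emph{after} integrating out the $u_j$'s (the raw integrand does not converge), so the dominated convergence must be organized as in the paper's $G_{\e_1,\e_2}(s,s')$ argument, with the boundary effect for $s_j$ within $O(\e_1^2+\e_2^2)$ of $\{0,t\}$ confined to a null set; and the remark that $Y,\ell\ge 0$ plays no role in the polarization identity. What the paper's detour through $\tilde Y$ buys is that $\ell$ never enters any Fourier computation, the limit being identified pathwise; what your route buys is a shorter, purely second-moment argument at the price of justifying the (standard) moment formulas for $\ell$ and the mixed moment.
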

\begin{proof}
Instead of directly proving the convergence of $ Y_{\e_1,\e_2}(t)$,
let us first consider a modified process $ \tilde{Y}_{\e_1,\e_2}(t) $ where $ B_1 $ and $ B_2 $ are evaluated at the same time:
\begin{align*}
	\tilde{Y}_{\e_1,\e_2}(t):=\int_0^t\int_0^t R_{\e_1,\e_2}(s-u,B_1(s)-B_2(s))dsdu,
\end{align*}
and show that $ \tilde{Y}_{\e_1,\e_2}(t) $ converges to $ \ell(t) $ in $L^2(\Sigma)$.
To this end, set
\begin{align*}
	F_{\e_1,\e_2}(s):=\int_0^t R_{\e_1,\e_2}(s-u,B_1(s)-B_2(s))du,  \quad  s\in[0,t].
\end{align*}
In the preceding double-integral expression of~$ \tilde{Y}_{\e_1,\e_2}(t) $,
divide the range of integration over $ s $ into subintervals depending on its distance from $ 0 $ and to $ t $.
We rewrite the expression as
\begin{align*}
	\tilde{Y}_{\e_1,\e_2}(t)
	=
	\int_0^t F_{\e_1,\e_2}(s)\Big(\ind_{(\e_1^2+\e_2^2,t-\e_1^2-\e_2^2)}(s)+\ind_{[0,\e_1^2+\e_2^2]}(s)+\ind_{[t-\e_1^2-\e_2^2,t]}(s)\Big)ds.
\end{align*}
Recall that $ \phi(t,\Cdot)=0$, $|t|\geq \frac12$. 
This gives $R_{\e_1,\e_2}(s-u,\Cdot)=0$ for all $|s-u|\geq \e_1^2+\e_2^2$. 
Consequently, for $ s\in (\e_1^2+\e_2^2,t-\e_1^2-\e_2^2) $ we have
\begin{align*}
	F_{\e_1,\e_2}(s)=\int_{\R}R_{\e_1,\e_2}(s-u,B_1(s)-B_2(s))du.
\end{align*}
Further setting $ \Phi(x) := \int_\R \phi(t,x) dt $, $ \Phi_{\e}(x) := \e^{-1}\Phi(\e^{-1}x) $,
and $\Phi_{\e_1,\e_2}(x):=\int \Phi_{\e_1}(x-y)\Phi_{\e_2}(-y)dy$,
we rewrite the last expression as
$ 	
	F_{\e_1,\e_2}(s)= \Phi_{\e_1,\e_2}(B_1(s)-B_2(s)). 
$
On the other hand, we also have $ |F_{\e_1,\e_2}(s)|\leq \Phi_{\e_1,\e_2}(B_1(s)-B_2(s)) $, for all $ s\in[0,t] $. 
This takes into account those values of $ s\not\in (\e_1^2+\e_2^2,t-\e_1^2-\e_2^2) $, thereby giving
\begin{align}
	\label{e.tildey}
	\tilde{Y}_{\e_1,\e_2}(t)=\int_{\e_1^2+\e_2^2}^{t-\e_1^2-\e_2^2} \Phi_{\e_1,\e_2}(B_1(s)-B_2(s))ds +r_{\e_1,\e_2}(t),
\end{align}
where $ r_{\e_1,\e_2}(t) $ is a  remainder term satisfying 
\begin{align*}
	|r_{\e_1,\e_2}(t)| \leq \int_0^t \Phi_{\e_1,\e_2}(B_1(s)-B_2(s))\Big(\ind_{[0,\e_1^2+\e_2^2]}(s)+\ind_{[t-\e_1^2-\e_2^2,t]}(s)\Big)ds.
\end{align*}

Now, for any interval $[a,b]\subset [0,\infty)$, by the definition of the localtime, 
\begin{equation}\label{e.phiab}
\begin{aligned}
&\int_a^b \Phi_{\e_1,\e_2}(B_1(s)-B_2(s)) ds=\int_{\R}\Phi_{\e_1,\e_2}(x)\Big(L(b,x;B_1-B_2)-L(a,x;B_1-B_2)\Big)dx\\
=&\int_{\R^2}\Phi(x)\Phi(-y)\Big(L(b,\e_1x+\e_2y;B_1-B_2)-L(a,\e_1x+\e_2y;B_1-B_2)\Big)dxdy.
\end{aligned}
\end{equation}
For almost every realization of $B_1-B_2$, 
the function $ x\mapsto L(t,x;B_1-B_2) $ is continuous and compactly supported,
and the function $ t\mapsto L(t,x;B_1-B_2)$ is increasing and continuous. 
Thus, from \eqref{e.tildey}--\eqref{e.phiab} and the fact that $ \int \Phi dx=1 $, 
we conclude that $\tilde{Y}_{\e_1,\e_2}(t)\to \ell(t)=L(t,0;B_1-B_2)$ almost surely as $\e_1,\e_2\to0$.
Further, the same calculations (via Fourier transform) as in the proof Proposition~\ref{p.expin} 
yields that 
\begin{align*}
	 \sup_{\e_1,\e_2}\E_B\big[e^{\lambda\tilde{Y}_{\e_1,\e_2}(t)} \big]
	 \leq
	 \E_B\big[ e^{\lambda L(t,0;B_1)} \big] < \infty.
\end{align*}
This property leverages the preceding almost sure convergence into a convergence in $ L^2(\Sigma) $:
\begin{align}
	\label{e.tilY.cnvg}
	\E_B[(\tilde{Y}_{\e_1,\e_2}(t)-\ell(t))^2]\longrightarrow 0 \quad\text{as } \e_1,\e_2\to 0 .
\end{align}

Given~\eqref{e.tilY.cnvg}, it remains to show that $ Y_{\e_1,\e_2}(t)-\tilde{Y}_{\e_1,\e_2}(t) \to 0 $ in $ L^2(\Sigma) $. We will actually prove the following result: for any choice of $Z_1,Z_2\in \{Y_{\e_1,\e_2}(t),\tilde{Y}_{\e_1,\e_2}(t)\}$, as $\e_1,\e_2\to0$, 
\begin{equation}\label{e.conz1z2}
	\E_B[Z_1Z_2]
	\longrightarrow 
	(2\pi)^{-2}\int_{[0,t]^2}\int_{\R^2} \E_B\Big[ e^{\img \xi(B_1(s)-B_2(s))} e^{\img \xi'(B_1(s')-B_2(s'))} \Big] d\xi d\xi' ds ds'.
\end{equation}
Once this is done, expanding $ \E[( Y_{\e_1,\e_2}(t)-\tilde{Y}_{\e_1,\e_2}(t))^2] $ into four terms, and passing to the limit complete the proof.
%

The proof for all cases of $ Z_1,Z_2\in \{Y_{\e_1,\e_2}(t),\tilde{Y}_{\e_1,\e_2}(t)\} $ is the same,
and we take $Z_1=Z_2=Y_{\e_1,\e_2}(t)$ as an example.
As in the proof of Proposition~\ref{p.expin}, we express $Y_{\e_1,\e_2}(t)$ via Fourier transform as
\begin{align*}
	Y_{\e_1,\e_2}(t)
	=&\int_{[0,t]^2} \left(\int_\R (2\pi)^{-1}\hat{R}_{\e_1,\e_2}(s-u,\xi)e^{\img\xi (B_1(s)-B_2(u))}d\xi\right) dsdu
\\
	=&\int_{[0,t]^2} 
	\left(
		\int_{\R^2} (2\pi \e_1^2\e_2^2)^{-1}\hat{\phi}(\tfrac{s-u-w}{\e_1^2},\e_1\xi)\hat{\phi}(\tfrac{-w}{\e_2^2},-\e_2\xi)e^{\img\xi (B_1(s)-B_2(u))}d\xi dw
	\right) dsdu,
\end{align*}
where $\hat{\phi}$ denotes the Fourier transform of $\phi$ in the $x$-variable. 
Squaring the last expression and taking expectation gives
\begin{align*}
	\E_B[Y_{\e_1,\e_2}(t)^2]=\int_{[0,t]^2} G_{\e_1,\e_2}(s,s')dsds',
\end{align*}
where
\begin{align*}
	G_{\e_1,\e_2}(s,s'):=\int_{[0,t]^2}\int_{\R^4} &(2\pi \e_1^2\e_2^2)^{-2} \hat{\phi}(\tfrac{s-u-w}{\e_1^2},\e_1\xi)\hat{\phi}(\tfrac{-w}{\e_2^2},-\e_2\xi)\hat{\phi}(\tfrac{s'-u'-w'}{\e_1^2},\e_1\xi')\hat{\phi}(\tfrac{-w'}{\e_2^2},-\e_2\xi')
\\
	&\times\E_B\Big[e^{\img\xi (B_1(s)-B_2(u))}e^{\img\xi' (B_1(s')-B_2(u'))}\Big]d \xi d\xi' dwdw'dudu'.
\end{align*}
\emph{Fix} $ (s,s')\in(0,t)^2 $.
Recall that $\hat{\phi}(t,\Cdot)=0$ whenever $|t|\geq\tfrac12$,
and note that, with $ (s,s')\in(0,t)^2 $ being fixed,
the conditions
$
	(t-s),(t-s'),s,s' \geq \frac12(\e_1^2+\e_2^2)
$
holds for all small enough $ \e_1,\e_2 $.
Consequently, in the last expression of $ G_{\e_1,\e_2}(s,s') $,
for all $ \e_1,\e_2 $ small enough, the integration domain of $u,u'$ can be (and is) replaced $[0,t]^2\mapsto \R^2$.
A change of variables in this case yields
\begin{align*}
	G_{\e_1,\e_2}(s,s')=\int_{\R^6} &(2\pi)^{-2} \hat{\phi}(u,\e_1\xi)\hat{\phi}(-w,-\e_2\xi)\hat{\phi}(u',\e_1\xi')\hat{\phi}(-w',-\e_2\xi')
\\
	&\times \E_B\Big[ e^{\img \xi (B_1(s)-B_2(s-\e_1^2 u-\e_2^2 w))} e^{\img \xi'(B_1(s')-B_2(s'-\e_1^2u'-\e_2^2 w'))}\Big] d\xi d\xi' dwdw'dudu'.
\end{align*}
Observe that the expectation in the above expression has Gaussian tails in $\xi,\xi'$, so by the dominated convergence theorem and the fact that $\int \hat{\phi}(u,0)du =\int \phi dudx=1$, we obtain
\begin{equation}\label{e.conG}
	G_{\e_1,\e_2}(s,s')\longrightarrow (2\pi)^{-2}\int_{\R^2}\E_B\Big[ e^{\img \xi (B_1(s)-B_2(s))} e^{\img \xi'(B_1(s')-B_2(s'))}\Big] d\xi d\xi',
	\quad
	\text{pointwisely in }(0,t)^2.
\end{equation}

To achieve~\eqref{e.conz1z2}, we need to upgrade the pointwise convergence of~\eqref{e.conG} to convergence in $ L^1([0,t]^2) $.
To this end, with $|\hat{\phi}(\Cdot,\xi)|\leq \hat{\phi}(\Cdot,0)$, we bound
\begin{align*}
|G_{\e_1,\e_2}(s,s')|\leq \int_{\R^6} &(2\pi \e_1^2\e_2^2)^{-2} \hat{\phi}(\tfrac{s-u-w}{\e_1^2},0)\hat{\phi}(\tfrac{-w}{\e_2^2},0)\hat{\phi}(\tfrac{s'-u'-w'}{\e_1^2},0)\hat{\phi}(\tfrac{-w'}{\e_2^2},0)
\\
&\times \E_B\Big[e^{\img\xi B_1(s)}e^{\img\xi' B_1(s')}\Big]d \xi d\xi' dwdw'dudu'.
\end{align*}
After integrating in $w,w',u,u'$ on the RHS of the last integral, we have
\begin{equation*}
	|G_{\e_1,\e_2}(s,s')|\leq (2\pi)^{-2}\int_{\R^2}\E_B\Big[e^{\img\xi B_1(s)}e^{\img\xi' B_1(s')}\Big]d \xi d\xi' \leq \frac{C}{\sqrt{(s\wedge s')|s-s'|}}\in L^1([0,t]^2).
\end{equation*}
Given this, the dominated convergence theorem upgrades~\eqref{e.conG} into a convergence in $ L^1([0,t]^2) $.
This gives~\eqref{e.conz1z2} and hence completes the proof.
\end{proof}

We next turn to the distributional limit of $ X_{1,\e} $ and $ X_{2,\e} $ (defined in~\eqref{e.xeps}).
Hereafter, we use $ \Rightarrow $ to denote the weak convergence of probability laws in a designated space,
and endow the space $ C[0,\infty) $ with the topology of uniform convergence over compact subsets of $ [0,\infty) $.
\begin{proposition}\label{p.wkcon}
As $ \e\to0 $, 
\begin{equation}\label{e.con5}
	(B_1,B_2,\ell, X_{1,\e},X_{2,\e})\Longrightarrow (B_1,B_2,\ell,\sigma_* W_1,\sigma_* W_2)
	\quad
	\text{ in } (C[0,\infty))^5 .
\end{equation}
where $ W_1,W_2 $ are standard Brownian motions independent of $ (B_1,B_2) $, and $\sigma_* \in (0,\infty) $ is given in \eqref{e.sigma}.
\end{proposition}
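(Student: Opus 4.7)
The plan is to reduce the statement to (a) tightness of $(X_{1,\e},X_{2,\e})$ in $(C[0,\infty))^2$, and (b) identification of the finite-dimensional limits as $\sigma_*(W_1,W_2)$ with $W_1,W_2$ independent Brownian motions independent of $(B_1,B_2)$. The components $B_1,B_2,\ell$ carry no $\e$ and are already continuous, so their part of the joint tightness is automatic, and once (a) and (b) are established the desired weak convergence in $(C[0,\infty))^5$ follows from a standard Prohorov argument.

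For tightness, I would pass to microscopic coordinates via~\eqref{e.xm.int}, which represents $X_{j,\e}\stackrel{\mathrm{law}}{=}\Xm_{j,\e}=\e\int_1^{\e^{-2}\cdot}\calXm(s)\,ds$ plus a uniformly vanishing remainder, with $\{\calXm(s)\}_s$ bounded, stationary, and $1$-dependent by~\eqref{e.calxm.station}--\eqref{e.calxm.finitRg}. Splitting the range of integration into alternating unit blocks, as in the proof of Proposition~\ref{p.expin}, produces two sums of bounded i.i.d.\ zero-mean summands, each of which admits Marcinkiewicz--Zygmund bounds. Combining them yields Kolmogorov-type moment bounds on increments,
\[
\E_B\big[\,|X_{j,\e}(t)-X_{j,\e}(s)|^{2k}\,\big]\le C(k,T)\big(|t-s|^k+\e^{2k}\big),\qquad 0\le s\le t\le T,
\]
for every $k\ge 1$, which together with $X_{j,\e}(0)=0$ is enough to conclude tightness of $(X_{1,\e},X_{2,\e})$ in $(C[0,\infty))^2$.

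For the identification of the limit I would argue in two substeps. First, the marginal convergence $X_{j,\e}(\cdot)\Rightarrow\sigma_* W_j(\cdot)$ in $C[0,\infty)$ follows from the classical functional CLT for bounded stationary $1$-dependent sequences applied to $\tilde{\calXm}_k:=\int_k^{k+1}\calXm(s)\,ds$, with asymptotic variance equal precisely to the $\sigma_*^2$ defined in~\eqref{e.sigma}. Second, since $X_{1,\e}$ is a functional of $B_1$ alone and $X_{2,\e}$ of $B_2$ alone, and $B_1\perp B_2$, the independence $W_1\perp W_2$ in the limit is automatic. The main obstacle, and what must be done last, is showing that the limiting process $\sigma_* W_j$ is independent of its generating Brownian motion $B_j$. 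I would handle this by a two-scale argument: fix $\delta>0$, partition $[0,T]$ into blocks $I_m=[m\delta,(m+1)\delta)$, and condition on the skeleton $(B_j(m\delta))_m$. Conditionally, the path of $B_j$ on each $I_m$ is a Brownian bridge, and the increment $X_{j,\e}((m+1)\delta)-X_{j,\e}(m\delta)$ then depends, up to a boundary correction of order $\e^2$ coming from the $u$-integration defining $\calXm$, only on this bridge. A direct second-moment computation, using the $1$-dependence of $\calXm$ and the self-similarity of Brownian increments, shows that the conditional variance of the block increment converges to $\sigma_*^2\delta$ uniformly in the skeleton, and that cross-block covariances vanish. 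Summing across blocks via the Cram\'er--Wold device and letting first $\e\to 0$ and then $\delta\to 0$, one obtains
\[
\E\Big[f(B_1,B_2,\ell)\exp\Big(i\sum_{j,k}\alpha_k^{(j)}X_{j,\e}(t_k)\Big)\Big]\longrightarrow \E[f(B_1,B_2,\ell)]\prod_{j=1}^2\exp\Big(-\tfrac12\sigma_*^2\sum_{k,k'}\alpha_k^{(j)}\alpha_{k'}^{(j)}(t_k\wedge t_{k'})\Big)
\]
for every bounded continuous $f$ and real $\alpha_k^{(j)}$, which combined with the tightness from the previous step yields the claimed weak convergence.
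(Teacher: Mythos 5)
Your reduction to tightness plus finite-dimensional identification is sound, and the tightness and marginal-CLT parts (block decomposition of $\Xm_{j,\e}=\e\int_1^{\e^{-2}\cdot}\calXm(s)\,ds+r_\e$, moment bounds, classical FCLT for the bounded stationary $1$-dependent $\calXm$, and the trivial independence of $W_1$ and $W_2$ coming from $B_1\perp B_2$) are all workable and close to what the paper does for tightness. The genuine gap is in the step you yourself flag as the crux: the asymptotic independence of the limit of $X_{j,\e}$ from its driving path $B_j$. Your skeleton argument only computes \emph{conditional second moments} — that the conditional variance of a block increment given the bridge endpoints tends to $\sigma_*^2\delta$ and that cross-block conditional covariances vanish. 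Matching conditional variances does not determine the conditional law: to reach the displayed convergence of $\E\big[f(B_1,B_2,\ell)\exp(i\sum\alpha_k^{(j)}X_{j,\e}(t_k))\big]$ you need the conditional characteristic function $\E\big[e^{i\alpha\,\Delta_m X_{j,\e}}\,\big|\,\text{bridge endpoints}\big]$ to converge to the Gaussian one, i.e.\ a CLT for the additive functional of a Brownian \emph{bridge} of duration $\delta\e^{-2}$, uniformly over typical endpoint configurations; Cram\'er--Wold only reduces dimension, it does not supply Gaussianity. Such a bridge CLT is plausible (the bridge drift is $O(\e^2/\delta)$ per unit microscopic time), but it is precisely the technical heart of the statement and is left unproven in your outline. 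Relatedly, ``uniformly in the skeleton'' cannot hold literally — atypically large skeleton increments distort the bridge — so the statement must be restricted to skeletons in sets of probability close to one.

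For comparison, the paper circumvents this entirely: after passing to microscopic coordinates it uses the Clark--Ocone formula to write (a modification of) $\Xm_{j,\e}$ as a martingale $\e\int_0^{\e^{-2}t}\calXmcoo_j(r)\,dB_j(r)$, and then applies the martingale CLT to the four-dimensional martingale $(\e B_1(\e^{-2}\cdot),\e B_2(\e^{-2}\cdot),\Xmcoo_{1,\e},\Xmcoo_{2,\e})$; the independence of the Gaussian limit from $B_j$ falls out automatically from the a.s.\ vanishing of the cross-variation $\e^2\int_0^{\e^{-2}t}\calXmcoo_j(s)\,ds\to0$ (Birkhoff), with no conditioning argument needed. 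If you want to keep your route, you must either prove the uniform bridge CLT for the block increments, or replace that step by a martingale/stable-convergence argument of this kind. One further point: in Step 2 you treat $f(B_1,B_2,\ell)$ as if $\ell$ posed no extra difficulty, but $\ell$ is not a continuous functional of the paths, which is exactly why the paper invokes Tanaka's formula and approximates the stochastic integral $\int_0^\cdot\sgn(B_1-B_2)\,d(B_1-B_2)$ by continuous functionals (Lemma~\ref{l.jointcon}); in your conditioning scheme this can instead be handled by approximating $f(B_1,B_2,\ell)$ by its conditional expectation given the skeleton (the error is uniform in $\e$), but this needs to be said.
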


\begin{proof}
The proof consists of two steps.\\
\noindent\textit{Step 1:}
Instead of showing~\eqref{e.con5} directly, let us first establish
\begin{equation}\label{e.con4}
	(B_1,B_2,X_{1,\e},X_{2,\e})\Longrightarrow (B_1,B_2,\sigma_* W_1,\sigma_* W_2)
	\quad
	\text{ in } (C[0,\infty))^4.
\end{equation}
To this end, we appeal to microscopic coordinates.
That is, with $ \Xm_{\e,j} $ given in~\eqref{e.xm} (with $ B $ replaced by $ B_j $),
we have
\begin{align}
	\label{e.cnvg.scaling}
	(B_1(t),B_2(t),X_{1,\e}(t),X_{2,\e}(t))_{t\geq 0}\stackrel{\text{law}}{=} (\e B_1({\e^{-2}t}),\e B_2(\e^{-2}t), \Xm_{1,\e}(t),\Xm_{2,\e}(t))_{t\geq0}.
\end{align}
We begin by writing $ \Xm_{j,\e} $ in terms of stochastic integrals. 
Let $ D_{j,r} $ denote the Malliavin derivative with respect to $ dB_j(r) $ on $(\Sigma,\fil_B,\P_B)$,
and let $ \fil_j(r) $ denote the canonical filtration of $ B_j $.
The Clark--Ocone formula \cite[Proposition 1.3.14]{nualart2006malliavin} states that (with $\Xm_{j,\e}(t)$ having zero mean)
\begin{align*}
	\Xm_{j,\e}(t)= \e \int_0^{\e^{-2}t} \calXmco_{j,\e}(r,t) dB_j(r),
	\quad
	\calXmco_{j,\e}(r,t)(r,t) := \e^{-1}\E[ D_{j,r} \Xm_{j,\e}(t) | \fil_{j}(r) ].
\end{align*}
A direct calculation yields
\begin{align*}
	D_{j,r} \Xm_{j,\e}(t)= \e\int_0^{\e^{-2}t}\int_0^s\partial_x R(s-u,B_j(s)-B_j(u)) du ds \ind_{\{u\leq r < s\}},
\end{align*}
so
\begin{align}
	\label{e.calXmco}
	\calXmco_{j,\e}(r,t) 
	=  \int_r^{\e^{-2}t}\int_{0}^r \E_B\Big[ \partial_x R(s-u,B_j(s)-B_j(u)) \Big| \fil_{j}(r) \Big] duds.
\end{align}
%
The function $ \partial_x R(s,x) $ vanishes for all $ |s| \geq 1 $ (because $ R $ does),
so by defining
\begin{equation}\label{e.ztilde}
	\calXmcoo_j(r):=\ind\{r\geq 1\}\int_r^{r+1}\int_{r-1}^r  \E_B\Big[  \partial_x R(s-u,B_j(s)-B_j(u)) \Big| \fil_{j}(r) \Big]  duds,
\end{equation}
we have $ \calXmco_{j,\e}(r,t) = \calXmcoo_j(r) $, for all $ r\in[1,\e^{-2}t-1] $.
The latter is preferred for our purpose, because it does not depend on $ t $.
In particular, the analogous integrated process:
\begin{align*}
	t\longmapsto \Xmcoo_{j,\e}(t)
	:=
	\e \int_0^{\e^{-2}t} \calXmcoo_{j}(r) dB_j(r)
\end{align*}
is a martingale (unlike $ \Xm_{j,\e}(t) $, which is not due to the $ t $-dependence of $ \calXmco_{j,\e}(r,t) $).
Also, for each $ t\in\R_+ $, the $ L^2(\Sigma) $-distance between $ \Xmcoo_{j,\e} $ and $ \Xm_{j,\e} $ vanishes as $ \e \to 0 $:
\begin{align}
	\label{e.XmL2}
	\E_B[|\Xmcoo_{j,\e}(t)-\Xm_{j,\e}(t)|^2]
	=
	\e^2\int_0^{\e^{-2}t} \E_B[|\calXmco_{j,\e}(r,t)-\calXmcoo_{j}(r)|^2] dr\leq  C\e^2.
\end{align}
Given this, let us focus on the modified process $ \Xmcoo_{j,\e} $ instead of $ \Xm_{j,\e} $.

Now, consider the $ C([0,\infty),\R^4) $-valued process 
\[
M_\e(t) = (M_{j,\e}(t))_{j=1}^4 := (\e B_1(\e^{-2}t),\e B_2(\e^{-2}t),\Xmcoo_{1,\e}(t),\Xmcoo_{2,\e}(t)) .
\]
It is a continuous martingale, with cross-variance
\begin{align*}
	\la M_{i,\e}, M_{j,\e} \ra(t)
	=
	\left\{\begin{array}{l@{,}l}
		0	& \text{ for } (i,j)=(1,2), (3,4), (1,4), (2,3),
	\\
		\e^2 \int_0^{\e^{-2}t} \calXmcoo_{i}(s) ds & \text{ for } (i,j) = (1,3), (2,4),
	\\
		t &	\text{ for } (i,j) = (1,1), (2,2),
	\\
		\e^2 \int_0^{\e^{-2}t} \calXmcoo^2_{i}(s) ds & \text{ for } (i,j) = (3,3), (4,4).
	\end{array}\right.
\end{align*}
Further, straightforward calculations from the expression \eqref{e.ztilde} gives, with $ \partial_x R $ being an odd function in $x$, $ \E_B[\tilde{\mathcal{Z}}_{i}(s)] =0 $ and $ \E_B[\tilde{\mathcal{Z}}^2_{i}(s)] =(\sigma'_*)^2 $, for all $ s \geq 1 $, where
\begin{align}
	\label{e.sigma'}
	(\sigma'_*)^2 := \E_B[\calXmcoo^2_{i}(1)] 
	=
	\E_B \Bigg[ \E_B\Big[ \int_1^{2} \int_{0}^1 \partial_x R(s-u,B_j(s)-B_j(u)) duds\Big| \fil_{j}(1) \Big]^2\Bigg].
\end{align}
Calculating the conditional expectation in~\eqref{e.ztilde} gives
\begin{align*}
	\tilde{\calXmco}_{j}(r)
	=\ind\{r \geq 1\} \int_r^{r+1}\int_{r-1}^r \tilde{R}(s,r,u,B_j(r)-B_j(u)) duds,
\end{align*}
where $ \tilde{R}(s,r,u,x) := \int_\R q(s-r,x-y) \partial_y R(s-u,y) dy $,
and $ q(t,x) := \frac{1}{\sqrt{2\pi t}}e^{-\frac{x^2}{2t}} $ denotes the standard heat kernel.
%
From this expression, it is readily check that $ (\calXmcoo_{i}(s))_{s\geq 1} $ is bounded, 
stationary, and has a finite range of dependence similar to~\eqref{e.calxm.finitRg}. In particular, the process $\{B_j(r)-B_j(u): u\in[r-1,r]\}_{r\geq0}$ is ergodic.  
Consequently, Birkhoff's Ergodic theorem applied to $ (\calXmcoo_{i}(s))_{s\geq 1} $ and $ \{\calXmcoo^2_{i}(s)\}_{s\geq 1} $ gives
\begin{align*}
	\e^2 \int_0^{\e^{-2}t} \calXmcoo_{i}(s) ds \longrightarrow 0,
	\quad
	\e^2 \int_0^{\e^{-2}t} \calXmcoo^2_{i}(s) ds \longrightarrow (\sigma'_*)^2t,
\end{align*}
almost surely as $ \e\to 0 $, for any fixed $ t\in\R_+ $.
Given these properties, the martingale central limit theorem~\cite[Thm~1.4, p339]{Ethier86} yields that
$
	M \Rightarrow (B_1,B_2,\sigma'_*W_1,\sigma'_*W_2),
$
in $ (C[0,\infty))^4$.
This together with~\eqref{e.cnvg.scaling} and~\eqref{e.XmL2} gives
\begin{align}
	\label{e.fdd}
	(B_1(t),B_2(t),X_{1,\e}(t),X_{2,\e}(t)) \Longrightarrow (B_1(t),B_2(t),\sigma'_*W_1(t),\sigma'_*W_2(t)),
	\text{ in fdd.}
\end{align}

Given~\eqref{e.fdd}, it now suffices to establish the tightness of $ (B_1,B_2,X_{1,\e},X_{2,\e}) $ in $ (C[0,\infty))^4 $,
and show that $ (\sigma'_*)^2 =\sigma^2_* $.
The first step is to appeal to microscopic coordinates. 
Using~\eqref{e.xm}--\eqref{e.xm.int} we write
\begin{align*}
	X_{j,\e}(t)\stackrel{\text{law}}{=} \Xm_{j,\e}(t)
	=\e\int_1^{\e^{-2}t}  \calXm_j(s) ds + r_\e(t).
\end{align*}
Given the properties~\eqref{e.calxm.station}--\eqref{e.calxm.finitRg} of $ \calXm_j $,
a classical functional central limit theorem, see, e.g., \cite[pp 178--179]{Billingsley99}, asserts that
\begin{align}
	\label{e.cnvg.xj}
	X_{j,\e} \Longrightarrow \sigma_* W_j,
	\quad
	\text{in } C[0,\infty).
\end{align}
To apply the result in \cite{Billingsley99}, a $\varphi-$mixing condition needs to be checked \cite[Eqn (20.65)]{Billingsley99}. This is clearly satisfied in our case because $\calXm_j$ has a finite range of dependence. The convergence in \eqref{e.cnvg.xj} in particular implies the tightness of $ (B_1,B_2,X_{1,\e},X_{2,\e}) $.
Further, comparing~\eqref{e.fdd}--\eqref{e.cnvg.xj}, we see that $ \sigma^2_* = (\sigma'_*)^2 $ must holds.
(Alternatively, it is possibly to show~$ \sigma^2_* = (\sigma'_*)^2 $ by calculations from the expressions~\eqref{e.sigma} and~\eqref{e.sigma'}.)
We thus conclude~\eqref{e.con4}.

\medskip
\textit{Step 2:}
Having established~\eqref{e.con4},
our next goal is to extend the convergence result to include the localtime process $ \ell $.
First, Tanaka's formula gives
\begin{align*}
	2\ell(t)=|B_1(t)-B_2(t)|-\int_0^t \sgn(B_1(s)-B_2(s)) d(B_1-B_2)(s).
\end{align*}
\emph{Had} it been the case that the RHS were a continuous function of $ B_1-B_2 $, 
the desired result~\eqref{e.con5} would follow immediately from~\eqref{e.con4}.
We show in Lemma~\ref{l.jointcon} that, in fact, the stochastic integral $ \int_0^t \sgn(B_1(s)-B_2(s)) d(B_1-B_2)(s) $ is well-approximated 
by a sequence of continuous functions of $ B_1-B_2 $.
That is, there exists a sequence $\{f_n\}_{n\geq 1}\subset C(C[0,\infty);C[0,\infty)) $ such that
\begin{align}
	\label{e.jointcon}
	\Big\Vert \int_0^\Cdot \sgn(B_1(s)-B_2(s)) d(B_1-B_2)(s) - f_n(B_1-B_2) \Big\Vert_{C[0,\infty)} \longrightarrow 0
	\
	\text{ in probability, as }
	n \to \infty.
\end{align} 

Now, fix arbitrary bounded and continuous $ g: (C[0,\infty))^3 \to \R $ and $ h:(C[0,\infty))^2\to \R $,
and consider test functions of the type $ g\otimes h \in (C[0,\infty))^5\to \R $.
It is known that the linear span of functions of this type is dense in $ C((C[0,\infty))^5;\R) $.
Hence proving~\eqref{e.con5} amounts to proving 
\begin{align}
	\label{e.con5.}
	\E_B[g(B_1,B_2,\ell)h(X_{1,\e},X_{2,\e})]
	\longrightarrow
	\E_B[g(B_1,B_2,\ell) h(\sigma_*W_{1},\sigma_*W_{2})],
	\quad
	\text{as } \e \to 0.
\end{align}
Set $ 2\ell_n(t) := |B_1(t)-B_2(t)| - f_n(B)(t) $.
Since $ f_n $ is continuous, for each fixed $ n $, 
from~\eqref{e.con4} we have
\begin{align*}
	\E_B[g(B_1,B_2,\ell_n)h(X_{1,\e},X_{2,\e})]
	\longrightarrow
	\E_B[g(B_1,B_2,\ell_n) h(\sigma_*W_{1},\sigma_*W_{2})]
	\quad
	\text{as } \e \to 0.	
\end{align*}
On the other hand, with $ g $ being bounded and continuous, by~\eqref{e.jointcon}, we have
\[
 \E_B[|g(B_1,B_2,\ell_n)-g(B_1,B_2,\ell)|] \to 0,
 \] as $ n\to\infty $.
From these the desired result~\eqref{e.con5.} follows. The proof is complete.
\end{proof}

\begin{lemma}\label{l.jointcon}
The claim~\eqref{e.jointcon} holds for a sequence $ \{f_n\}_{n\geq 1} \subset C(C[0,\infty);C[0,\infty)) $.
\end{lemma}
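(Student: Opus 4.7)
The plan is to approximate $\sgn$ by smooth bounded functions and use It\^{o}'s formula to rewrite the resulting stochastic integral as an explicit continuous functional of the path. Set $B(t) := B_1(t)-B_2(t)$, which (up to a factor $\sqrt{2}$) is a Brownian motion starting at $0$. Choose $\psi_n \in C^1(\R)$ odd with $|\psi_n| \leq 1$, $\psi_n(x) \to \sgn(x)$ for every $x \neq 0$, and $\psi_n'$ bounded and continuous; for instance $\psi_n(x) := \tanh(nx)$ works. Let $\Psi_n(x) := \int_0^x \psi_n(y) dy$, so $\Psi_n \in C^2(\R)$ with $\Psi_n(0)=0$.

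Applying It\^{o}'s formula to $\Psi_n(B(t))$ gives
\begin{align*}
	\int_0^t \psi_n(B(s)) dB(s) = \Psi_n(B(t)) - \tfrac12 \int_0^t \psi_n'(B(s)) ds.
\end{align*}
The \ac{RHS} depends only on the path $B$, so we define
\begin{align*}
	f_n : C[0,\infty) \to C[0,\infty),
	\qquad
	f_n(b)(t) := \Psi_n(b(t)) - \tfrac12 \int_0^t \psi_n'(b(s)) ds.
\end{align*}
Since $\Psi_n$ and $\psi_n'$ are continuous on $\R$, a uniform limit $b_k \to b$ on any compact $[0,T]$ yields $f_n(b_k) \to f_n(b)$ uniformly on $[0,T]$, so $f_n \in C(C[0,\infty); C[0,\infty))$.

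It remains to show that $\int_0^\cdot \psi_n(B(s)) dB(s) \to \int_0^\cdot \sgn(B(s)) dB(s)$ in $C[0,\infty)$ in probability as $n \to \infty$. Fix $T>0$. By Doob's $L^2$ maximal inequality and the It\^{o} isometry,
\begin{align*}
	\E_B \Bigl[ \sup_{t \leq T}\Bigl( \int_0^t (\psi_n(B(s)) - \sgn(B(s))) dB(s) \Bigr)^2 \Bigr]
	\leq 4\, \E_B \int_0^T (\psi_n(B(s)) - \sgn(B(s)))^2 ds.
\end{align*}
Because $B(s)$ has a density for each $s>0$, almost surely the set $\{s \in [0,T] : B(s)=0\}$ has zero Lebesgue measure, so $\psi_n(B(s)) \to \sgn(B(s))$ for a.e.\ $s$; the bounded convergence theorem then sends the \ac{RHS} to $0$. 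This gives uniform convergence on $[0,T]$ in probability, and since $T$ is arbitrary, we obtain~\eqref{e.jointcon}.

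I do not foresee any serious obstacle: the argument is the standard reduction of an It\^{o} integral to a pathwise functional via It\^{o}'s formula, combined with an $L^2$ approximation of $\sgn$. The only mild point of care is choosing $\psi_n$ with continuous $\psi_n'$ (rather than a piecewise-linear ramp) so that $f_n$ is genuinely continuous on $C[0,\infty)$.
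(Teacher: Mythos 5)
Your proposal is correct in substance, but it takes a genuinely different route from the paper. The paper constructs $f_n$ directly as a discretized, Riemann-sum approximation of the stochastic integral: the integrand $\sgn$ is replaced by a ramp function $\zeta_n$ sharpening at rate $n^{1/4}$ and evaluated at the grid points $k/n$, and the error is controlled through Doob's inequality and the It\^o isometry combined with a modulus-of-continuity bound and a small-ball estimate (reflection principle) for the events on which $\zeta_n(\ov{B}(\tfrac kn))\neq\sgn(\ov{B}(t))$. You instead smooth the integrand and use It\^o's formula to rewrite $\int_0^\cdot \psi_n(\ov{B}(s))\,d\ov{B}(s)$ as an exact pathwise functional of $\ov{B}$, so that the only estimate needed is the $L^2$ convergence $\E_B\int_0^T(\psi_n(\ov{B}(s))-\sgn(\ov{B}(s)))^2ds\to0$, which follows from dominated convergence since the zero set of $\ov{B}$ is Lebesgue-null almost surely. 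Your route is more economical (no discretization, no threshold tuning, no reflection-principle bound), at the price of invoking It\^o's formula for the mollified functions $\Psi_n$ — essentially the smoothed Tanaka argument — whereas the paper stays at the level of simple-process approximations of the stochastic integral.

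One slip to correct: $\ov{B}=B_1-B_2$ has quadratic variation $\langle\ov{B}\rangle_t=2t$, so It\^o's formula reads $\Psi_n(\ov{B}(t))=\int_0^t\psi_n(\ov{B}(s))\,d\ov{B}(s)+\int_0^t\psi_n'(\ov{B}(s))\,ds$, without the factor $\tfrac12$; your $f_n$ should be defined with this correction term (and the It\^o isometry likewise carries an extra factor $2$ in the maximal inequality). With the constant fixed, the argument goes through and yields \eqref{e.jointcon}.
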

\begin{proof}
Set $ \ov{B}(t) := B_1(t)-B_2(t) $ and $ U(t):= \int_0^t \sgn(\ov{B}(s)) d\ov{B}(s) $ to simplify the notation.
We begin by constructing the continuous function $ f_n $.
Set $ \zeta(x) := x \ind\{|x|\leq 1\}+ \sgn(x) \ind\{ |x|>1\}$, $\zeta_n(x) := \zeta(n^{1/4}x)$,
and define
\begin{align*}
	f_n(y)(t) 
	:=
	\int_0^t \sum_{k=0}^\infty \zeta_n(y(\tfrac{k}{n})) \ind_{[\frac{k}{n},\frac{k+1}{n})}(s) dy(s)
	:=
	\sum_{k=0}^\infty \zeta_n(y(\tfrac{k}{n})) \big( y(\tfrac{k+1}{n}\wedge t)-y(\tfrac{k}{n} \wedge t) \big).
\end{align*}
Indeed, $ f_n $ is continuous for each fixed $ n $.
Fix an arbitrary $T>0$. 
Using Doob's $ L^2 $-martingale inequality and It\^{o} isometry, we calculate
\begin{equation}
\begin{aligned}
    \E_B \Big[ \sup_{[0,T]} |U(t)-f_n(\ov{B})(t)|^2\Big]
	&\leq
	C \int_0^T \E_B\Big[\Big| \sum_{k=0}^\infty \zeta_n(\ov{B}(\tfrac{k}{n})) \ind_{[\frac{k}{n},\frac{k+1}{n})}(t)-\sgn(\ov{B}(t)) \Big|^2\Big] dt
\\
	&=
	C \sum_{k=0}^\infty \int_{I_{k,n}(T)} \E_B[|  \zeta_n(\ov{B}(\tfrac{k}{n})) -\sgn(\ov{B}(t)) |^2] dt,
\end{aligned}
\end{equation}
where $ I_{k,n}(T) := [\frac{k}{n},\frac{k+1}{n}) \cap [0,T] $.
Set 
$
	V_{k,n} := \sup_{s\in[0,\frac{1}{n}]} |\ov{B}(s+\frac{k}{n})-\ov{B}(\frac{k}{n})|.
$
On the interval $t\in I_{k,n}(T) $, we have $\zeta_n(\ov{B}(\frac{k}{n}))=\sgn(\ov{B}(t)) $ whenever
$ |\ov{B}(\frac{k}{n})| > n^{-\frac14} $ and $ V_{k,n} < n^{-\frac14} $.
Hence
\begin{align*}
    \E_B \Big[ \sup_{[0,T]} |U(t)-f_n(\ov{B})(t)|^2\Big]
	\leq
	C \sum_{k=0}^{[nT]} \Big( \P_B\big[ V_{k,n} \geq n^{-\frac14}\big] +\P_B\big[|\ov{B}(\tfrac{k}{n})|\leq n^{-\frac14}\big] \Big) \frac{1}{n}.
\end{align*}
By the scaling property of Brownian motion and the reflection principle,
we have that $ V_{k,n}\stackrel{\text{law}}{=} \sqrt{2/n}|Z| $ and that $ \ov{B}(\tfrac{k}{n}) \stackrel{\text{law}}{=} \sqrt{2k/n}Z $,
where $ Z $ is a standard Gaussian.
This gives
\begin{align*}
    \E_B \Big[ \sup_{[0,T]} |U(t)-f_n(\ov{B})(t)|^2\Big]
	\leq
	C \sum_{k=0}^{[nT]} \Big( \P_B\big[ |Z| \geq 2^{-\frac12} n^{\frac14} \big] +\P_B\big[|Z|\leq \tfrac{n^{1/4}}{(2k)^{1/2}} \big] \Big) \frac{1}{n},
\end{align*}
with $ \tfrac{n^{1/4}}{(2k)^{1/2}}:=\infty$ when $k=0$. It is now readily verified that the last expression tends to $ 0 $ as $ n\to\infty $.
From this the desired result follows: $ \sup_{[0,T]} |U(t)-f_n(\ov{B})(t)| \to 0 $ in probability, as $ n\to\infty $, for each fixed $ T $.
\end{proof}

\section{Proof of Theorem~\ref{t.mainth}}\label{s.proof}

Let us first establish the boundedness of moments of $ u_\e(t,x) $.
Set $ \lambda_n := \frac{n(n+1)}{2} $.
With the initial condition $ u_0 $ being bounded, applying H\"{o}lder's inequality 
(with exponents $ (\lambda_n,\ldots,\lambda_n) $) in the formula~\eqref{e.nmom}, we have
\begin{align*}
	\E [|u_\e(t,x)|^n] 
	&
	\leq
	C
	\Bigg(
		\prod_{j=1}^n \E_B\big[ \exp\big( \lambda_n\big(X_{j,\e}(t) - \tfrac{1}{2}\sigma_*^2 t + r_{\e}(t) \big)\big) \big]
		\prod_{1\leq i<j\leq n} \E_B\big[ \exp\big( \lambda_nY_{i,j,\e}(t) \big) \big]
	\Bigg)^{1/\lambda_n}
\\
	&
	\leq
	C
	\Bigg( \E_B\big[ \exp\big( \lambda_nX_{\e}(t) \big) \big] \Bigg)^{n/\lambda_n}
	\Bigg( \E_B\big[ \exp\big( \lambda_nY_{i,j,\e}(t) \big) \big] \Bigg)^{n(n-1)/2\lambda_n}.
\end{align*}
Using the exponential moment bounds from Proposition~\ref{p.expin}, we obtain
$ \sup_{\e\in(0,1)}\E [|u_\e(t,x)|^n] <\infty $.
That is, moments of $ u_\e(t,x) $ are bounded uniformly in $ \e $.
This reduces proving~\eqref{e.main} for all $ n\geq 1 $ to proving~\eqref{e.main} for \emph{just one} $ n\geq 1 $, since it implies the convergence in probability, and combining with the uniform integrability of $|u_\e(t,x)|^n$, we will have the convergence in $L^n(\Omega)$. We henceforward consider $ n=2 $.

%
%

Let us first identify the limit of $ \E[u_{\e_1}(t,x)u_{\e_2}(t,x)] $.
Recall from~\eqref{e.localtime} that $ \ell(t) $ denotes the mutual intersection localtime of $B_1,B_2$,
and that $ \U $ denotes the solution of the \ac{SHE}~\eqref{e.she}.
\begin{proposition}\label{p.cauchy}
We have
\begin{align*}
	\lim_{\e_1,\e_2\to 0} \E[u_{\e_1}(t,x)u_{\e_2}(t,x)]
	=
	\E_B\Big[u_0(x+B_1(t))u_0(x+B_2(t)) \exp(\ell(t))\Big].
\end{align*}
\end{proposition}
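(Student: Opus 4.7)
The starting point is formula \eqref{e.2mom}, which writes $\E[u_{\e_1}(t,x)u_{\e_2}(t,x)]$ as an $\E_B$-expectation of the bounded quantity $u_0(x+B_1(t))u_0(x+B_2(t))$ times
\begin{align*}
\exp\Big(Y_{\e_1,\e_2}(t) + X_{1,\e_1}(t) + X_{2,\e_2}(t) - \sigma_*^2 t + r_{\e_1}(t) + r_{\e_2}(t)\Big).
\end{align*}
The plan is to pass to the limit inside this expectation by combining a joint weak convergence with uniform integrability, and then to simplify the resulting limit using independence.

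For the joint weak convergence I would assemble
\begin{align*}
\big(B_1, B_2, Y_{\e_1,\e_2}(t), X_{1,\e_1}(t), X_{2,\e_2}(t)\big) \Longrightarrow \big(B_1, B_2, \ell(t), \sigma_* W_1(t), \sigma_* W_2(t)\big),
\end{align*}
where $W_1, W_2$ are standard Brownian motions independent of $(B_1, B_2)$ and of each other. The argument underlying Proposition~\ref{p.wkcon}, applied separately to $(B_1, X_{1,\e_1})$ and $(B_2, X_{2,\e_2})$ and combined via $B_1 \perp B_2$, yields the joint limit of $(B_1, B_2, X_{1,\e_1}(t), X_{2,\e_2}(t))$ with the stated independence structure. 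Proposition~\ref{p.ycnvg} gives $Y_{\e_1,\e_2}(t) \to \ell(t)$ in $L^2$, hence in probability; since $\ell(t)$ is $\sigma(B_1,B_2)$-measurable, a Slutsky-type argument appends it to the joint convergence.

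Next I would establish uniform integrability of the integrand. For any $\lambda > 1$, H\"older's inequality applied with three factors gives
\begin{align*}
\E_B\big[\exp\big(\lambda(Y_{\e_1,\e_2}(t) + X_{1,\e_1}(t) + X_{2,\e_2}(t))\big)\big] \leq \E_B[e^{3\lambda Y_{\e_1,\e_2}(t)}]^{1/3} \prod_{j=1}^2 \E_B[e^{3\lambda X_{j,\e_j}(t)}]^{1/3},
\end{align*}
which is bounded uniformly in $\e_1, \e_2$ by Proposition~\ref{p.expin}. Combined with the weak convergence above, this yields
\begin{align*}
\lim_{\e_1, \e_2 \to 0} \E[u_{\e_1}(t,x)u_{\e_2}(t,x)] = \E_B\Big[u_0(x+B_1(t))u_0(x+B_2(t))\exp\big(\ell(t) + \sigma_* W_1(t) + \sigma_* W_2(t) - \sigma_*^2 t\big)\Big].
\end{align*}
Finally, conditioning on $(B_1, B_2)$ and using that $W_1, W_2$ are independent Brownian motions independent of $(B_1, B_2)$, the Gaussian identity $\E[\exp(\sigma_* W_j(t))] = \exp(\sigma_*^2 t/2)$ reduces the factor $\exp(\sigma_* W_1(t) + \sigma_* W_2(t) - \sigma_*^2 t)$ to $1$ in expectation, leaving the claimed identity.

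The main delicate point is promoting Propositions~\ref{p.wkcon} and~\ref{p.ycnvg} to a single joint convergence with distinct parameters $\e_1 \neq \e_2$ and the correct independence of $W_1$ from $W_2$ and from $(B_1,B_2)$. Once that is in hand, the uniform integrability estimate and the Gaussian integration are routine.
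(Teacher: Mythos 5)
Your proposal is correct and follows essentially the same route as the paper: start from \eqref{e.2mom}, obtain the joint convergence $(B_1,B_2,Y_{\e_1,\e_2}(t),X_{1,\e_1}(t),X_{2,\e_2}(t))\Rightarrow(B_1,B_2,\ell(t),\sigma_*W_1(t),\sigma_*W_2(t))$ from Propositions~\ref{p.ycnvg}--\ref{p.wkcon}, and pass to the limit using the uniform exponential moments of Proposition~\ref{p.expin}. You merely spell out steps the paper leaves implicit (the H\"older bound for uniform integrability, the treatment of $\e_1\neq\e_2$, and the Gaussian identity $\E[e^{\sigma_*W_j(t)}]=e^{\sigma_*^2t/2}$ that cancels the $-\sigma_*^2t$), with the one caveat that appending $\ell(t)$ to the joint convergence is not literally Slutsky but exactly the approximation carried out in Step~2 of Proposition~\ref{p.wkcon} via Lemma~\ref{l.jointcon}, which you may simply invoke.
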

\begin{proof}
The starting point of the proof is the formula~\eqref{e.2mom}:
\begin{align}
	\E[u_{\e_1}(t,x)u_{\e_2}(t,x)]
	=
	\E_B\Big[ \prod_{j=1}^2 u_0(x+ B_j(t)) 
	\exp\Big( Y_{\e_1,\e_2}(t)+\sum_{j=1}^2\Big(X_{j,\e_j}(t) - \frac{1}{2}\sigma_*^2 t + r_{\e_j}(t) 
	\Big)\Big) \Big].
\end{align}
By virtue of Propositions~\ref{p.ycnvg}--\ref{p.wkcon}, we have
\[
(B_1(t),B_2(t),Y_{\e_1,\e_2}(t),X_{1,\e_1}(t),X_{2,\e_2}(t))\Rightarrow (B_1(t),B_2(t),\ell(t),\sigma_*W_1(t),\sigma_*W_2(t))
\]
in distribution.
The proof is complete by invoking Propositions~\ref{p.expin}.
\end{proof}

Now, with
\begin{align*}
	\E[(u_{\e_1}(t,x)-u_{\e_2}(t,x))^2]= \E[u_{\e_1}(t,x)u_{\e_1}(t,x)] - 2\E[u_{\e_1}(t,x)u_{\e_2}(t,x)] + \E[u_{\e_2}(t,x)u_{\e_2}(t,x)],
\end{align*}
Proposition~\ref{p.cauchy} has an immediate corollary:
\begin{corollary}\label{c.cauchy}
The sequence $ \{u_\e(t,x)\}_{\e\in(0,1)} $ is Cauchy in $ L^2(\Omega) $.
\end{corollary}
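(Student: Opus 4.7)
The plan is to read off the Cauchy property directly from Proposition~\ref{p.cauchy} together with the algebraic identity displayed immediately above the statement,
\[
\E[(u_{\e_1}(t,x)-u_{\e_2}(t,x))^2] = \E[u_{\e_1}(t,x)^2] - 2\E[u_{\e_1}(t,x)u_{\e_2}(t,x)] + \E[u_{\e_2}(t,x)^2].
\]
Write $L := \E_B\big[u_0(x+B_1(t))u_0(x+B_2(t))\exp(\ell(t))\big]$ for the limiting quantity identified in Proposition~\ref{p.cauchy}. The proposition asserts that $\E[u_{\e_1}(t,x)u_{\e_2}(t,x)] \to L$ as $\e_1,\e_2 \to 0$ \emph{jointly}, without any restriction on how the two parameters approach $0$.

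Applying this in three ways would complete the argument. First, specializing the joint limit to the diagonal $\e_1=\e_2=\e$ yields $\E[u_\e(t,x)^2] \to L$, so each of the outer two terms in the displayed identity tends to $L$ as $\e_1 \to 0$ and $\e_2 \to 0$ respectively. Second, the middle (cross) term also tends to $L$, now by the full joint statement. Adding these contributions gives $L - 2L + L = 0$, i.e.,
\[
\lim_{\e_1,\e_2\to 0}\E[(u_{\e_1}(t,x)-u_{\e_2}(t,x))^2] = 0,
\]
which is precisely the Cauchy property in $L^2(\Omega)$.

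There is no real obstacle here: all the analytic content (the weak convergence of the triple $(Y_{\e_1,\e_2},X_{1,\e_1},X_{2,\e_2})$ to $(\ell,\sigma_*W_1,\sigma_*W_2)$ and the uniform integrability coming from the exponential moment bounds of Proposition~\ref{p.expin}) has already been discharged in the proof of Proposition~\ref{p.cauchy}. The only point worth flagging explicitly is that the limit $L$ does not depend on the manner in which $\e_1$ and $\e_2$ tend to zero, so the same $L$ appears in all three terms after passage to the limit; this is exactly what is needed for the three contributions to cancel.
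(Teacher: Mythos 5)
Your proposal is correct and is exactly the paper's argument: the corollary is derived from the displayed identity together with Proposition~\ref{p.cauchy}, all three terms converging to the same limit $L$ (the diagonal case being a specialization of the joint limit), so the difference tends to $0$. Nothing further is needed.
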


Given this result, it suffices to identify the unique limit of $ u_\e(t,x) $ in $L^2(\Omega)$.
We achieve this by \emph{Wiener chaos expansion}.
Fix $ (t,x)\in\R_+\times\R $ hereafter, and denote $ R^k_< :=\{(s_1,\ldots,s_k)\in\R^k: s_1<\ldots<s_k\} $. 
Given any $ f\in L^2(R^k_<\times \R^k) $, we consider the $ k $-th order multiple stochastic integral
\begin{align*}
	I_k(f)
	:=
	\int_{\R^k_<\times\R^k} f(s_1,\ldots,s_k,y_1,\ldots,y_k) \prod_{i=1}^n \xi(s_i,y_i) ds_idy_i.
\end{align*}
Let $ q(s,y) = \frac{1}{\sqrt{2\pi s}} e^{-y^2/2s} $ denotes the standard heat kernel, the solution $ \U $ of the \ac{SHE} permits the chaos expansion (we omit the dependence on $(t,x)$)
\begin{align}
	\label{e.chaos.U}
	\U(t,x) = \sum_{k=0}^\infty I_k(f_k),
	\quad
	f_k := \ind_{\{0<s_1<\ldots<s_k<t\}}\int_\R u_0(y_0)\prod_{i=0}^k q(s_{i+1}-s_i,y_{i+1}-y_{i}) dy_0,
\end{align}
under the convention $ y_{k+1}:=x $, $ s_0:=0 $, and $ s_{k+1}:= t $. 
For $ u_\e $, a similar expansion also exists: using the Stroock formula~\cite[Eqn (7), p3]{Stroock87}, we arrive at
\begin{align}
	\label{e.chaos.u}
	u_\e(t,x)=\sum_{k=0}^\infty I_k(f_{k,\e}),
	\quad
	f_{k,\e}:=\E[ D^k u_\e(t,x)].
\end{align}
\rev{(Alternatively, this formula~\eqref{e.chaos.u} can also be obtained from the Wick exponential.)}
Here $ D $ denotes the Malliavin derivative with respect to $\xi$ on $(\Omega,\fil,\P)$.
To calculate the chaos coefficient $ f_{k,\e} $, we set
\begin{align}\label{e.defPsi}
	\Psi_{\e,B}(r,y):=\int_0^t \phi_\e(t-s-r,x+B(s)-y)ds,
\end{align}
and rewrite the Feynman--Kac formula \eqref{e.fk} as 
\begin{align*}
	u_\e(t,x)=\E_B\Big[ u_0(x+B(t)) \exp\Big( \int_{\R^{2}} \Psi_{\e,B}(r,y)\xi(r,y) dydr -c_\e t\Big)\Big].
\end{align*}
From this expression we calculate
\begin{equation}
\begin{aligned}
	\label{e.chaos.fe}
	&f_{k,\e}(r_1,\ldots,r_k,y_1,\ldots,y_k)\\
	&=\E_B\Big[u_0(x+B(t))\exp\Big(\int_0^t\int_0^{s} R_\e(s-u,B(s)-B(u))duds-c_\e t\Big) \prod_{i=1}^k\Psi_{\e,B}(r_i,y_i)\Big].
\end{aligned}
\end{equation}

Denoting the $L^2(\Omega)-$limit of $u_\e (t,x)$ by $\mathscr{U}(t,x)$, and the chaos expansion of $\mathscr{U}(t,x)$ is written as 
\begin{equation}\label{e.chaoslimit}
\mathscr{U}(t,x)=\sum_{k=0}^\infty I_k(\tilde{f}_k).
\end{equation}

The following lemma completes the proof of Theorem~\ref{t.mainth}.
\begin{lemma}\label{l.chaos}
$\mathscr{U}(t,x)=\mathcal{U}(t,x)$ in $L^2(\Omega)$.
\end{lemma}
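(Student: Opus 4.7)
The plan is to show $\tilde{f}_k = f_k$ in $L^2$ for every $k\geq 0$ and then conclude from the chaos expansions \eqref{e.chaoslimit} and \eqref{e.chaos.U}. The first ingredient is essentially free: Corollary~\ref{c.cauchy} gives $u_\e\to\mathscr{U}$ in $L^2(\Omega)$, so the Wiener chaos isometry $\|I_k(g)\|_{L^2(\Omega)}^2 = k!\,\|g\|_{L^2}^2$ together with orthogonality yields
\[
\sum_{k\geq 0} k!\,\|f_{k,\e}-\tilde{f}_k\|_{L^2(R^k_<\times\R^k)}^2 = \E\big[(u_\e-\mathscr{U})^2\big] \longrightarrow 0,
\]
so $f_{k,\e}\to\tilde{f}_k$ in $L^2$ for each $k$. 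It therefore suffices to show $f_{k,\e}\to f_k$ (pointwise a.e.\ would do, in view of the $L^2$ convergence just established).

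For the identification, fix $0 < r_1 < \cdots < r_k < t$ and $y_1,\ldots,y_k\in\R$, and start from \eqref{e.chaos.fe}. Using the rearrangement derived in Section~\ref{s.BM},
\[
f_{k,\e} = \E_B\Big[u_0(x+B(t))\exp\big(X_\e(t) - \tfrac12\sigma_*^2 t + r_\e(t)\big)\prod_{i=1}^k\Psi_{\e,B}(r_i,y_i)\Big].
\]
Each $\Psi_{\e,B}(r_i,y_i)$ is supported, in its $s$-variable, on a time window of width $O(\e^2)$ centered at $s=t-r_i$; since $B$ varies by only $O(\e)$ on this scale, one can replace $\Psi_{\e,B}(r_i,y_i)$ by the spatial mollifier $\Phi_\e(x+B(t-r_i)-y_i)$ (with $\Phi$ as in the proof of Proposition~\ref{p.ycnvg}) up to an error vanishing as $\e\to 0$. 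For small $\e$ the $k$ mollifiers act on disjoint time windows, so the product localizes the path at $B(t-r_i)\approx y_i-x$.

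Conditional on the event $\{B(t-r_i)=y_i-x\}_{i=1}^k$, whose density under $\P_B$ is $q(t-r_k,x-y_k)\prod_{i=1}^{k-1}q(r_{i+1}-r_i,y_{i+1}-y_i)$ by the Markov property, the conditional expectation of $u_0(x+B(t))$ equals $\int u_0(y_0)q(r_1,y_1-y_0)dy_0$. It remains to argue that the exponential factor $\exp(X_\e(t)-\tfrac12\sigma_*^2 t)$ decouples from the $B$-path in the limit. By Proposition~\ref{p.wkcon}, $(B,X_\e)\Rightarrow(B,\sigma_* W)$ with $W\perp B$, so $\exp(X_\e(t)-\tfrac12\sigma_*^2 t)$ converges in law to a mean-one random variable independent of $B$; the finite-range-of-dependence structure \eqref{e.calxm.finitRg} of $\mathcal{X}^{\text{mi}}$ ensures that this asymptotic independence persists under the conditioning at the finitely many isolated macroscopic times $t-r_i$. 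Assembling these pieces produces the pointwise limit
\[
\lim_{\e\to 0} f_{k,\e}(r_1,\ldots,r_k,y_1,\ldots,y_k) = q(t-r_k,x-y_k)\prod_{i=1}^{k-1}q(r_{i+1}-r_i,y_{i+1}-y_i)\int u_0(y_0)q(r_1,y_1-y_0)dy_0,
\]
which coincides with $f_k$ by symmetry of $q$ in its spatial argument. Combined with Step~1 this yields $\tilde{f}_k = f_k$ and hence $\mathscr{U}=\mathcal{U}$ in $L^2(\Omega)$; uniform $L^2$-domination of $f_{k,\e}$, if needed to upgrade the pointwise convergence, is provided by Cauchy--Schwarz and Proposition~\ref{p.expin}.

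The most delicate step is the decoupling of the exponential factor under Brownian conditioning in the third paragraph: conditioning $B$ to pass through $k$ prescribed points turns its law into a concatenation of bridges, and one must verify that the martingale-CLT argument underlying Proposition~\ref{p.wkcon} survives this modification. This is where the short range of dependence of $\mathcal{X}^{\text{mi}}$ plays the crucial role; once it is invoked, the remaining manipulations are routine.
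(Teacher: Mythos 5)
Your Step~1 (orthogonality of the chaos gives $f_{k,\e}\to\tilde f_k$ in $L^2$, so it suffices to identify the limit with $f_k$) is exactly the paper's reduction, and your claimed limiting expression for $f_{k,\e}$ is the correct one. The gap is in how you identify the limit: you try to prove \emph{pointwise} convergence $f_{k,\e}(r,y)\to f_k(r,y)$ by replacing $\Psi_{\e,B}(r_i,y_i)$ with a spatial Dirac approximation $\Phi_\e(x+B(t-r_i)-y_i)$, conditioning $B$ to pass through the points $y_i-x$, and then asserting that $\exp\bigl(X_\e(t)-\tfrac12\sigma_*^2t\bigr)$ decouples from the conditioned path because of Proposition~\ref{p.wkcon} and the finite range of dependence \eqref{e.calxm.finitRg}. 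This assertion is precisely the step that does not follow from anything proved in the paper. Proposition~\ref{p.wkcon} is a weak convergence statement in the uniform topology on $C[0,\infty)$; the factors $\Phi_\e(x+B(t-r_i)-y_i)$ are of size $\e^{-1}$ and probe the path at scale $\e$, so the map $B\mapsto\prod_i\Phi_\e(x+B(t-r_i)-y_i)$ is neither bounded nor continuous in that topology, and the joint law of $X_\e$ with the path \emph{conditioned at Dirac scale} (a concatenation of bridges, as you note) is not controlled by the unconditioned invariance principle. What you would need is a local limit theorem / local version of \eqref{e.wkcon} commensurate with the scale of $\phi_\e$, which is exactly the technical obstruction the paper's remark before the proof points out and says requires substantial extra work; your last paragraph acknowledges this is "the most delicate step" but then only invokes the finite range of dependence without an argument, so the proof as written is incomplete at its crucial point.

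The paper circumvents this difficulty rather than confronting it: since $\tilde f_k$ is already known to be the $L^2$ limit of $f_{k,\e}$, it suffices to identify the limit distributionally, i.e.\ to show $\la f_{k,\e},g\ra\to\la f_k,g\ra$ for smooth compactly supported $g$. After the change of variables $r_i\mapsto\e^2r_i'+t-s_i$, $y_i\mapsto\e y_i'+x+B(s_i)$ in \eqref{e.fg}, the Dirac-scale factors integrate out against $\phi$, and the remaining integrand is a bounded continuous functional of $\bigl(X_\e(t),B(s_1),\ldots,B(s_k),B(t)\bigr)$ at fixed macroscopic times, to which Proposition~\ref{p.wkcon} applies directly, with Proposition~\ref{p.expin} supplying the uniform integrability for dominated convergence. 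If you want to keep your pointwise route, you must actually prove the conditional/local version of the invariance principle you invoke (e.g.\ a local CLT for $X_\e$ jointly with the bridged path); otherwise you should smear in $(r,y)$ as the paper does.
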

\begin{remark}
\rev{The major component of the following proof is to establish the convergence $ f_{k,\e}\to f_k $ in $ L^2(\R^k_<\times\R^k) $.
To set up the premise of the proof, we first give a heuristic explanation why the convergence should hold.
Under current notations, Proposition~\ref{p.wkcon} gives the following weak convergence in $ C[0,\infty)^2 $,
\begin{align}
	\label{e.wkcon}
	\Big(\int_0^t\int_0^{s} R_\e(s-u,B(s)-B(u))duds-c_\e t, B(t)\Big) \Longrightarrow  \Big(\sigma_* W(t)-\tfrac12\sigma_*^2t, B(t)\Big),
\end{align}
where $ W $ and $ B $ are \emph{independent} standard Brownian motions.
This implies
\begin{align}
	\label{e.wkcon:}
	\E_B\Big[u_0(x+B(t))\exp\Big(\int_0^t\int_0^{s} R_\e(s-u,B(s)-B(u))duds-c_\e t\Big) F(B)\Big]
	\longrightarrow
	\E_B\big[u_0(x+B(t))F(B)\big],
\end{align}
for any continuous, bounded test function $ F: C[0,\infty) \to \R $.
Referring to~\eqref{e.chaos.fe} and \eqref{e.chaos.U},
together with $\phi_\e(t,x)\to \delta(t,x)$,
we can informally view the convergence $ f_{k,\e}\to f_k $ as a generalization of~\eqref{e.wkcon:}
where the test function depend on $ \e $.}

\rev{To \emph{prove} $ f_{k,\e}\to f_k $, the weak convergence~\eqref{e.wkcon} does not suffice.
This is so especially because the test function $ \phi_\e(t,x) $ (which approximates the Dirac function)
probes \emph{small}-scales that are not compatible with the topology of the weak convergence~\eqref{e.wkcon}.
One possible proof is to establish a local version~\eqref{e.wkcon} that is commensurate with the scale of $ \phi_\e(t,x) $.
Doing so requires much technical effort.
Instead, we \emph{circumvent} this technical issue by testing $ f_{k,\e} $ against a smooth test function $ g $,
after which the weak convergence~\eqref{e.wkcon} applies.}
\end{remark}
\begin{proof}
Given the chaos expansions in \eqref{e.chaos.U} and \eqref{e.chaoslimit}, it suffices to show $\tilde{f}_k=f_k$. Since $u_\e(t,x)\to \mathscr{U}(t,x)$ in $L^2(\Omega)$, using the orthogonality of the chaos, i.e.,
\begin{align*}
	\E\big[(u_{\e}-\mathscr{U})^2\big]
	=
	\sum_{k=0}^\infty \int_{\R^k_<\times\R^k} (f_{k,\e}-\tilde{f}_k)^2 dsdy,
\end{align*}
we see that, for each $ k \geq 0 $, $f_{k,\e} \to \tilde{f}_k$ in $ L^2(\R^k_<\times\R^k) $. 
Fix arbitrary $ g\in C_c^\infty(\R^{2n}) $, we consider 
\begin{align}
	\la f_{k,\e},g\ra
	:=\int_{\R^k_<\times\R^k} f_{k,\e}(r_1,\ldots,r_k,y_1,\ldots,y_k) g(r_1,\ldots,r_k,y_1,\ldots,y_k)drdy.
\end{align}
To prove $\tilde{f}_k=f_k$, it suffices to show $\la f_{k,\e},g\ra\to \la f_{k},g\ra$ as $\e\to0$. 

The formula~\eqref{e.chaos.fe} yields
\begin{equation}\label{e.fg}
\begin{aligned}
\la f_{k,\e},g\ra=\int_{\R_<^k\times \R^k} \E_B\Big[ &u_0(x+B(t)) e^{\int_0^t\int_0^s R_\e(s-u,B(s)-B(u))duds-c_\e t}\\
&\Big(\int_{[0,t]^k}\prod_{i=1}^k \phi_\e(t-s_i-r_i,x+B(s_i)-y_i)ds\Big) \Big] g(r_1,\ldots,r_k,y_1,\ldots,y_k)drdy.
\end{aligned}
\end{equation}
With $ \phi_\e(\Cdot,\Cdot) := \e^{-3}\phi(\e^{-2}\Cdot,\e^{-1}\Cdot) $,
we perform a change of variables $ r_i\mapsto \e^2 r_i'+t-s_i $, $ y_i\mapsto \e y_i'+x+B(s_i)
$ to rewrite the last expression as
\[
\begin{aligned}
\la f_{k,\e},g\ra=\int_{\R^{3k}}&\ind_{A_\e\cap A'}(r',s)\prod_{i=1}^k \phi(-r_i',-y_i')\E_B\Big[ u_0(x+B(t)) e^{\int_0^t\int_0^s R_\e(s-u,B(s)-B(u))duds-c_\e t} \\
&\times g\big(\e^2r_1'+t-s_1,\ldots,\e^2r_k'+t-s_k,\e y_1'+x+B(s_1),\ldots,\e y_k'+x+B(s_k)\big)\Big]dr'dy'ds,
\end{aligned}
\]
where $A_\e:=\{\e^2 r_1'+t-s_1<\ldots<\e^2 r_k'+t-s_k\}$ and $ A' := \{(s_1,\ldots,s_k)\in[0,t]^k\} $ 
translate the constraints on the old variables into the new ones.
In order to pass to the limit, we note that, by Proposition~\ref{p.wkcon}, \eqref{e.xeps}, \eqref{e.c1} and our choice of $c_\e$, for any fixed $(s_1,\ldots,s_k)\in[0,t]^k$,
\[
\Big(\int_0^t\int_0^s R_\e(s-u,B(s)-B(u))duds-c_\e t, B(s_1),\ldots,B(s_k),B(t)\Big)\Rightarrow \Big(\sigma_* W(t)-\tfrac12\sigma_*^2 t, B(s_1),\ldots,B(s_k),B(t)\Big).
\]
In addition, for any fixed $(r_1',\ldots,r_k')\in\R^k$, 
$
\ind_{A_\e\cap A'}(r',s)\to \ind_{\{0<s_k<\ldots<s_1<t\}}.
$
By applying Proposition~\ref{p.expin} and the dominated convergence theorem, we arrive at 
\begin{align*}
	\la f_{k,\e},g\ra\longrightarrow \int_{\R^{3k}}&\ind_{\{0<s_k<\ldots<s_1<t\}}\prod_{i=1}^k \phi(-r_i',-y_i')
\\
	&\times \E_B\Big[u_0(x+B(t)) e^{\sigma_*W(t)-\frac12\sigma_*^2 t}g(t-s_1,\ldots,t-s_k,x+B(s_1),\ldots,x+B(s_k))\Big] dr'dy'ds.
\end{align*}
In the last expression, integrate over $ (r_i,y_i) $ using $\int \phi drdy=1$,
and perform a change of variables $ t-s_i\mapsto s_i $.
We see that it equals
\begin{align*}
	\int_{\R_<^k\cap[0,t]^k} \E_B\Big[u_0(x+B(t)) g(s_1,\ldots,s_k,x+B(t-s_1),\ldots,x+B(t-s_k))\Big]ds=\la f_k, g\ra.
\end{align*}
The proof is complete.\end{proof}

\begin{remark}
In the discrete setting, the convergence to SHE from the partition function of a random polymer with a weak randomness was proved in Alberts-Khanin-Quastel \cite{alberts2014intermediate}, also based on a chaos expansion.
\end{remark}

\appendix
\section{Homogenization and stochasticity}
\label{s.transition}

In this appendix we investigate the behaviors of~\eqref{e.maineq} at different scales.
Let $ u_0(x)\in C_b(\R) $ be fixed as in the rest of the article.
For $ \alpha >0 $, set
\begin{align*}
	\xi_{\e,\alpha}(t,x)=\int_{\R^2} \phi_{\e,\alpha}(t-s,x-y)\xi(s,y)dyds,
	\quad
	\phi_{\e,\alpha}(t,x)=\e^{-\frac32 \alpha}\phi(\e^{-\alpha}t,\e^{-\frac{\alpha}{2}}x),
\end{align*}
and consider the solution $ v_{\e,\alpha} $ of
\begin{align}
	\label{e.vea}
	\partial_t v_{\e,\alpha} = \tfrac12 \partial_{xx} v_{\e,\alpha} +\e^{\frac12-\frac{\alpha}{4}} \xi_{\e,\alpha} v_{\e,\alpha},
	\quad
	v_{\e,\alpha}(0,x) = u_0(x).
\end{align}
Referring to~\eqref{e.defxi}, we see that $ \xi_{\e,2} = \xi_{\e} $.
For $ \alpha=2 $, \eqref{e.vea} is the same as \eqref{e.maineq} up to a centering by $ -c_\e u_{\e} $.
We thus view~\eqref{e.vea} as a generalization of~\eqref{e.maineq} to scales $ \alpha>0 $.
To see why the specific choice of prefactor~$ \e^{\frac12-\frac14\alpha} $ in~\eqref{e.vea} is relevant,
perform a change of variable $ v(t,x) := v_{\e,\alpha}(\e^{\alpha}t, \e^{\alpha/2} x) $ in~\eqref{e.vea}
to bring the equation into the `microscopic' coordinates.
Using the scaling properties $\sqrt{AB}\, \xi(A\Cdot,B\Cdot) \stackrel{\text{law}}{=}  \xi(\Cdot,\Cdot) $ of $ \xi $,
we see that $ v $ solves
\begin{align}
	\label{e.micro}
	\partial_t v=\tfrac12 \partial_{xx} v+\sqrt{\e} \eta(t,x) v,
\end{align}
where
$
	\eta(t,x) = \int_{\R^2} \phi(t-s,x-y)\tilde{\xi}(s,y)dyds,
$
for some $ \tilde{\xi} \stackrel{\text{law}}{=} \xi $.
That is, the equation~\eqref{e.vea} encodes the behavior of~\eqref{e.micro} (which is $ \alpha $-independent)
at the scale $(t,x)\sim (\e^{-\alpha},\e^{-\alpha/2}) $.

Theorem~\ref{t.mainth} yields that $ v_{\e,2}\exp(-c_\e t) \to \U $.
On the other hand, a homogenization result was proved in \cite{bal2011convergence,Pardoux12} at the scale $ \alpha=1 $,
$
	v_{\e,1}(t,x) \to \bar{v}(t,x) \exp(c_*t),
$
where $ \bar{v} $ solves the unperturbed heat equation 
\[
\partial_t\bar{v} =\tfrac12 \partial_{xx} \bar{v}, \quad \bar{v}(0,x)=u_0(x).
\]
In the following, we establish an analogous homogenization result for $ \alpha \in [1,2) $,
together with a Gaussian fluctuation result.
\begin{proposition}\label{p.between}
Fix $ \alpha\in [1,2) $. For any given $(t,x)\in \R_+\times \R$, we have 
\begin{align}
	\label{e.homo1}
	&v_{\e,\alpha}(t,x) \exp(-\tfrac{c_*t}{\e^{\alpha-1}}) \longrightarrow \bar{v}(t,x) \quad \text{in probability},
\\
	&\E \Big[ \Big| \e^{-\frac{2-\alpha}{4}} \big( v_{\e,\alpha}(t,x)-\E[v_{\e,\alpha}(t,x)] \big) \exp(-\tfrac{c_*t}{\e^{\alpha-1}}) - \mathscr{V}(t,x) \Big|^2 \Big] 
	\longrightarrow 0.
\end{align}
where 
$\mathscr{V}$ solves the Edwards-Wilkinson equation
\begin{align}
	\label{e.ew}
	\partial_t \mathscr{V}=\tfrac12\partial_{xx}\mathscr{V}+\bar{v}\xi, \quad \mathscr{V}(0,x) = 0.
\end{align}
\end{proposition}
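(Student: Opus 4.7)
The plan is to import the Feynman--Kac machinery of Section~\ref{s.BM} after the microscopic rescaling $s=\e^\alpha s'$, $B(\e^\alpha s')=\e^{\alpha/2}\td B(s')$, under which every moment formula of Section~\ref{s.BM} applies verbatim with the interval $[0,\e^{-2}t]$ replaced by $[0,\e^{-\alpha}t]$. In particular, the stationary decomposition~\eqref{e.xm.int}--\eqref{e.calxm} reads
\begin{align*}
\e^{1-\alpha/2}\!\int_0^t\!\int_0^s\! R_{\e,\alpha}(s-u,B(s)-B(u))\,du\,ds\stackrel{\mathrm{law}}{=}\tfrac{c_*t}{\e^{\alpha-1}}+N_\e(B)+r_\e(t),
\end{align*}
with $N_\e(B):=\e\int_1^{\e^{-\alpha}t}\calXm(s)\,ds$ a mean-zero stationary finite-range sum of $L^2(\P_B)$-size $O(\e^{(2-\alpha)/2})\to 0$, and the exponential bounds of Proposition~\ref{p.expin} and the $L^2$-convergence in Proposition~\ref{p.ycnvg} carry over as well. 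Throughout, set $\td v_{\e,\alpha}:=v_{\e,\alpha}(t,x)\exp(-c_*t/\e^{\alpha-1})$ and $w_{\e,\alpha}:=\e^{-(2-\alpha)/4}(\td v_{\e,\alpha}-\E[\td v_{\e,\alpha}])$.

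For the homogenization claim, Feynman--Kac yields $\E[\td v_{\e,\alpha}(t,x)]=\E_B[u_0(x+B(t))\exp(N_\e(B)+r_\e)]\to\bar{v}(t,x)$ from $N_\e\to 0$ in $L^2(\P_B)$ and Proposition~\ref{p.expin}. The two-Brownian analogue of~\eqref{e.2mom} gives
\begin{align*}
\E[\td v_{\e,\alpha}(t,x)^2]=\E_B\big[u_0(x+B_1(t))u_0(x+B_2(t))\exp\big(N_{\e,1}+N_{\e,2}+\e^{1-\alpha/2}Y_{\e,\alpha}(t)+r_\e\big)\big],
\end{align*}
with $Y_{\e,\alpha}$ the analogue of~\eqref{e.yeps}. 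Since $\e^{1-\alpha/2}\to 0$ while $Y_{\e,\alpha}(t)\to\ell(t)$ in $L^2(\P_B)$ (Proposition~\ref{p.ycnvg} transcribed), the cross term converges to $1$ in $L^1$, so $\E[\td v_{\e,\alpha}^2]\to\bar{v}(t,x)^2$. Combined with the first-moment limit, $\td v_{\e,\alpha}\to\bar{v}(t,x)$ in $L^2(\Omega)$, hence in probability.

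For the Edwards--Wilkinson statement, I would use a chaos expansion. Stroock's formula applied to the Feynman--Kac representation $v_{\e,\alpha}(t,x)=\E_B[u_0(x+B(t))\exp(\e^{1/2-\alpha/4}\int\Psi_{\e,\alpha,B}\xi)]$, with $\Psi_{\e,\alpha,B}$ as in~\eqref{e.defPsi}, yields
\begin{align*}
\td v_{\e,\alpha}(t,x)=\sum_{k\ge0}\e^{k(2-\alpha)/4}I_k(h_{k,\e}),\quad h_{k,\e}(r,y):=\E_B\Big[u_0(x+B(t))e^{N_\e(B)+r_\e}\prod_{i=1}^k\Psi_{\e,\alpha,B}(r_i,y_i)\Big].
\end{align*}
Using the identity $\int\Psi_{\e,\alpha,B_1}(r,y)\Psi_{\e,\alpha,B_2}(r,y)\,drdy=Y_{\e,\alpha}(t)$ underlying~\eqref{e.2mom}, It\^o's isometry gives $\|h_{k,\e}\|_{L^2(\R^k_<\times\R^k)}^2=\tfrac1{k!}\E_B[u_0(x+B_1(t))u_0(x+B_2(t))e^{N_{\e,1}+N_{\e,2}+r_\e}Y_{\e,\alpha}(t)^k]$, hence $\sum_{k\ge 1}\e^{k(2-\alpha)/2}\|h_{k,\e}\|^2=\mathrm{Var}(\td v_{\e,\alpha})$. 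Expanding $e^{\e^{1-\alpha/2}Y_{\e,\alpha}}-1$ in Taylor series (the quadratic remainder is $O(\e^{2-\alpha})$ by Proposition~\ref{p.expin}) and passing $Y_{\e,\alpha}\to\ell$ to the limit, I obtain both $\e^{-(2-\alpha)/2}\mathrm{Var}(\td v_{\e,\alpha})\to\E_B[u_0(x+B_1(t))u_0(x+B_2(t))\ell(t)]$ and, separately from the $k=1$ term, $\|h_{1,\e}\|_{L^2}^2\to\E_B[u_0(x+B_1(t))u_0(x+B_2(t))\ell(t)]$. Subtracting, $\sum_{k\ge2}\e^{(k-1)(2-\alpha)/2}\|h_{k,\e}\|^2\to 0$, so $w_{\e,\alpha}-I_1(h_{1,\e})\to 0$ in $L^2(\Omega)$.

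The main obstacle is identifying $I_1(h_{1,\e})\to\mathscr{V}(t,x)$ in $L^2(\Omega)$, i.e.\ $h_{1,\e}\to H_1$ in $L^2(\R_+\times\R)$ where $H_1(r,y):=q(t-r,y-x)\bar{v}(r,y)\ind_{0<r<t}$ is the kernel for which $\mathscr{V}(t,x)=I_1(H_1)$ via the mild formulation of~\eqref{e.ew}. The pointwise limit of $h_{1,\e}$ is singular since $\Psi_{\e,\alpha,B}(r,y)$ concentrates on the random curve $\{y=x+B(t-r)\}$ at scale $\e^{\alpha/2}$. Following the strategy of Lemma~\ref{l.chaos}, I would first pair $h_{1,\e}$ with $g\in C_c^\infty(\R^2)$, perform the change of variables $r\mapsto\e^\alpha r'+t-s$, $y\mapsto\e^{\alpha/2}y'+x+B(s)$ to extract $\phi$ and a smooth sampling of $g$ at $(t-s,x+B(s))$, and pass to the limit using $N_\e\to 0$ in $L^2(\P_B)$, dominated convergence via Proposition~\ref{p.expin}, and $\int\phi=1$, obtaining $\la h_{1,\e},g\ra\to\la H_1,g\ra$. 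Since the conditioning computation $\E[u_0(x+B_j(t))|B_j(s)=z]=\bar{v}(t-s,x+z)$ identifies $\E_B[u_0(x+B_1(t))u_0(x+B_2(t))\ell(t)]=\|H_1\|_{L^2}^2$, the norm convergence established in the previous paragraph combines with the weak convergence to upgrade $h_{1,\e}\to H_1$ in $L^2(\R_+\times\R)$, concluding the proof.
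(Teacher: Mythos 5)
Your proposal is correct, and for the fluctuation claim it follows the same skeleton as the paper: Stroock's formula gives $\tilde v_{\e,\alpha}:=v_{\e,\alpha}(t,x)e^{-c_*t/\e^{\alpha-1}}=\sum_k\e^{k(2-\alpha)/4}I_k(h_{k,\e})$, the chaoses of order $k\ge2$ are negligible, and the first chaos converges to $\mathscr{V}(t,x)$. Two deviations are worth recording. First, you obtain the homogenization statement from a direct first/second moment computation, $\E[\tilde v_{\e,\alpha}]\to\bar v$ and $\E[\tilde v_{\e,\alpha}^2]\to\bar v^2$, using that $Y_{\e,\alpha}=Y_{\delta,\delta}$ and the relevant $X$-functional is $\e^{(2-\alpha)/2}X_\delta$ with $\delta=\e^{\alpha/2}$, so Propositions~\ref{p.expin} and~\ref{p.ycnvg} apply verbatim; the paper instead reads it off the chaos expansion ($\bar f_{0,\e^{\alpha/2}}\to\bar v$ plus vanishing of the higher chaoses). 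Both routes are short and equivalent in substance. Second, and more usefully, for the first chaos the paper asserts $\bar f_{1,\e^{\alpha/2}}\to q(t-\cdot,x-\cdot)\bar v$ in $L^2(\R^2)$ ``by a similar argument as in Lemma~\ref{l.chaos}'', but that argument by itself only yields convergence of pairings against $g\in C_c^\infty$; your proposal supplies the missing upgrade by computing $\|h_{1,\e}\|_{L^2}^2=\E_B[u_0(x+B_1(t))u_0(x+B_2(t))e^{N_{\e,1}+N_{\e,2}+r_\e}Y_{\e,\alpha}(t)]\to\E_B[u_0(x+B_1(t))u_0(x+B_2(t))\ell(t)]=\|H_1\|_{L^2}^2$ via the chaos isometry and the conditioning identity, so that weak convergence plus convergence of norms gives strong $L^2$ convergence, hence $I_1(h_{1,\e})\to\mathscr{V}(t,x)$. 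The same isometry bookkeeping, $\sum_{k\ge1}\e^{(k-1)(2-\alpha)/2}\|h_{k,\e}\|^2=\e^{-(2-\alpha)/2}\mathrm{Var}(\tilde v_{\e,\alpha})\to\E_B[u_0u_0\,\ell(t)]$ matched against the $k=1$ contribution, is a clean quantitative substitute for the paper's ``it is not hard to show'' tail estimate. One small caution: in the Taylor-expansion step, write the variance as the exact difference of the two Feynman--Kac/Gaussian formulas, so that the $r_\e$-type remainders stay inside the exponential prefactor and only need to tend to zero rather than be $o(\e^{(2-\alpha)/2})$ after the division; with that phrasing the bound $0\le\e^{-(2-\alpha)/2}\bigl(e^{\e^{(2-\alpha)/2}Y}-1\bigr)\le Y e^{\e^{(2-\alpha)/2}Y}$ together with Proposition~\ref{p.expin} closes the argument.
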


The result shows that if we start the microscopic dynamics \eqref{e.micro} with the randomness of size $\sqrt{\e}$, then the small Gaussian fluctuations prevail in $t\sim \e^{-\alpha}$ for any $\alpha<2$. As we increase the time scale to $\alpha=2$, the random fluctuations become of order $O(1)$ and is described by the SHE.

\begin{proof}[Sketch of proof]
Compared with Theorem~\ref{t.mainth} (i.e., $\alpha=2$), 
the scale $ \alpha \in [1,2) $ considered here is easier to analyze, so we only sketch the proof. 
Fix $ \alpha\in[1,2) $ and set $ \beta := \tfrac12-\tfrac{\alpha}{4}>0 $.
Apply the Stroock formula in a similar way as we established \eqref{e.chaos.u} and~\eqref{e.chaos.fe}, here we have 
\begin{align}
	\label{e.chaos.u.}
	v_\e(t,x) \exp(-\tfrac{c_*t}{\e^{\alpha-1}}) = \sum_{k=0}^\infty \e^{\beta k} I_k(\bar{f}_{k,\e^{\alpha/2}}),
\end{align}
where
\begin{align}
\begin{aligned}
	\label{e.chaos.fe.}
	\bar{f}_{k,\delta}(r_1,\ldots,r_k,y_1,\ldots,y_k)=\E_B\Big[u_0(x+B(t))\exp\Big(\e^{2\beta}X_{\delta}(t)+r_\e(t)\Big) \prod_{i=1}^k\Psi_{\delta,B}(r_i,y_i)\Big].
\end{aligned}
\end{align}
With $ \e^{\beta k} \leq \e^{\beta} \to 0 $, for $ k \geq 1 $, it is not hard to show that
\begin{align*}
	\sum_{k=1}^\infty \e^{\beta k} I_k(\bar{f}_{k,\e^{\alpha/2}}) \longrightarrow 0
	\quad
	\text{in } L^2(\Omega),
\end{align*}
and that $ \bar{f}_{0,\e^{\alpha/2}}\to \E_B[u_0(x+B(t))]=\bar{v}(t,x) $. This concludes the first claim. 

Moving onto the second claim regarding random fluctuations, we write
\begin{equation}\label{e.10162}
\e^{-\beta}(v_\e(t,x)-\E[v_\e(t,x)]) \exp(-\tfrac{c_*t}{\e^{\alpha-1}})=I_1(\bar{f}_{1,\e^{\alpha/2}})+\sum_{k=2}^\infty \e^{\beta (k-1)} I_k(\bar{f}_{k,\e^{\alpha/2}}).
\end{equation}
Again, With $ \e^{\beta (k-1)} \leq \e^{\beta} \to 0 $, for $ k\geq 2 $,
it is not hard to show that the second term on the RHS goes to zero in $L^2(\Omega)$. 
This being the case, we focus on the first order chaos $ I_1(\bar{f}_{1,\e^{\alpha/2}}) $. 
Using a similar argument as in the proof of Lemma~\ref{l.chaos}, here we have
\begin{align*}
	\bar{f}_{1,\e^{\alpha/2}}(r,y)\to q(t-r,x-y)\int_\R u_0(z)q(r,y-z)dz=q(t-r,x-y)\bar{v}(r,y), \quad \text{in } L^2(\R^2),
\end{align*}
which then yields
\begin{align*}
	I_1(\bar{f}_{1,\e^{\alpha/2}}) \longrightarrow \int_0^t\int_\R q(t-r,x-y)\bar{v}(r,y)\xi(r,y)drdy,
	\quad
	\text{in } L^2(\Omega).
\end{align*}
The expression $ \int_0^t\int_\R q(t-r,x-y)\bar{v}(r,y)\xi(r,y)drdy $ is exactly $ \mathscr{V}(t,x) $.
We hence conclude the second claim.
\end{proof}

\end{document}